\newcommand{\RR}{\mathbb{R}}    
\newcommand{\F}{\mathcal{F}}    
\newcommand{\cc}{\mathfrak{c}}   
\newcommand{\oo}{\mathfrak{o}}   
\newcommand{\mm}{\mathfrak{m}}   
\newcommand{\II}{\mathfrak{I}}   
\newcommand{\hati}{\hat\imath}
\newcommand{\hatj}{\hat\jmath}
\newcommand{\Tsymb}{\top}
\newcommand{\T}{^{\Tsymb}}
\newcommand{\mT}{^{-\Tsymb}}
\def\^#1{^{(#1)}}
\def\s^#1{^{\smash{(#1)}}}
\def\:{\colon}
\newcommand{\cupdot}{\mathbin{\mathaccent\cdot\cup}}
\newcommand{\labelstyle}[1]{\upshape(\textit{#1})}
\newcommand{\mylabel}{\labelstyle{\roman*}}
\newenvironment{myenumerate}{\begin{enumerate}[label=\mylabel]}{\end{enumerate}}
\def\itm#1{\labelstyle{\romannumeral#1\relax}}
\newcommand{\myitem}[1]{%
\item[#1]\protected@edef\@currentlabel{#1}%
}
\newcommand{\freespace}{\kern.07em}
\newcommand{\enquote}[1]{``#1''}                                 
\newcommand{\ul}[1]{\underline{\smash{#1}}}
\newcommand{\msays}[1]{{\footnotesize\textcolor{red}{#1}}}
\newcommand{\TODO}{\msays{TODO}}
\theoremstyle{plain}  
\newtheorem{theorem}{Theorem}[section]
\newtheorem{corollary}[theorem]{Corollary}
\newtheorem{lemma}[theorem]{Lemma}
\newtheorem{proposition}[theorem]{Proposition}
\newtheorem{conjecture}[theorem]{Conjecture}
\theoremstyle{definition} 
\newtheorem{definition}[theorem]{Definition}
\newtheorem{example}[theorem]{Ex\-am\-ple}
\newtheorem{remark}[theorem]{Remark}
\newtheorem{question}[theorem]{Question}
\newtheorem{observation}[theorem]{Observation}
\newtheorem{construction}[theorem]{Construction}
\crefname{theorem}{Theorem}{Theorems}
\crefname{proposition}{Proposition}{Propositions}
\crefname{lemma}{Lemma}{Lemmas}
\crefname{corollary}{Corollary}{Corollaries}
\crefname{remark}{Remark}{Remarks}
\crefname{example}{Example}{Examples}
\crefname{definition}{Definition}{Definitions}
\crefname{problem}{Problem}{Problems}
\crefname{observation}{Observation}{Observation}
\crefname{construction}{Construction}{Construction}
\DeclareMathOperator{\conv}{conv}
\DeclareMathOperator{\Aut}{Aut}
\DeclareMathOperator{\OO}{O}
\DeclareMathOperator{\GL}{GL}
\DeclareMathOperator{\PGL}{PGL}
\DeclareMathOperator{\Span}{span}
\DeclareMathOperator{\rank}{rank}
\DeclareMathOperator{\Id}{Id}
\DeclareMathOperator{\Sym}{Sym}
\DeclareMathOperator{\Perm}{Perm}   	
\DeclareMathOperator{\vol}{vol}  	
\DeclareMathOperator{\Int}{int}  	
\DeclareMathOperator{\Orb}{Orb}
\let\x=\times
\def\...{...}
\newcommand{\shortStyle}{\textit}
\newcommand{\ie}{\shortStyle{i.e.,}}
\newcommand{\eg}{\shortStyle{e.g.}}
\newcommand{\wrt}{\shortStyle{w.r.t.}}
\newcommand{\cf}{\shortStyle{cf.}}
\newcommand{\resp}{resp.}
\let\angle=\measuredangle
\renewcommand*{\eqref}[1]{%
  \hyperref[{#1}]{\textup{\tagform@{\ref*{#1}}}}%
}
\numberwithin{equation}{section}
\def\nlspace{\nolinebreak\space}
\begin{document}


\expandafter\title
{Capturing Polytopal Symmetries by Coloring the Edge-Graph}
		
\author[M. Winter]{Martin Winter}
\address{Faculty of Mathematics, University of Technology, 09107 Chemnitz, Germany}
\email{martin.winter@mathematik.tu-chemnitz.de}
	
\subjclass[2010]{51M20, 52B05, 52B11, 52B15, 05C50}
\keywords{convex polytopes, linear symmetries, orthogonal symmetries, edge-graph, graph coloring, graph symmetries}
		
\date{\today}
\begin{abstract}
A general (convex) polytope $P\subset\RR^d$ and its edge-graph $G_P$ can have very distinct symmetry properties.
We construct a coloring (of the vertices and edges) of the edge-graph so that the combinatorial symmetry group of the colored edge-graph is isomorphic (in a natural way) to $\Aut_{\GL}(P)$, the group of linear symmetries of  the polytope.
We also construct an analogous coloring for $\Aut_{\OO}(P)$, the group of orthogonal symmetries of $P$.
\end{abstract}

\maketitle

\section{Introduction}
\label{sec:introduction}


In the context of this article, a \emph{polytope} $P\subset\RR^d$ will always be a \emph{convex} polytope, that is, $P$ is the convex hull of finitely many points.
A \emph{symmetry} of $P$~is a certain transformation of the ambient space that fixes the polytope set-wise.
Our focus is specifically on the groups
\begin{align*}
    \Aut_{\GL}(P) &:= \{T\in\GL(\RR^d)\mid TP=P\},\;\text{and}\\
    \Aut_{\OO}(P) &:= \{T\in\OO(\RR^d)\mid TP=P\},
\end{align*}
called the \emph{linear} \resp\ \emph{orthogonal symmetry group} of $P$. 

Initially defined geometrically, one can ask whether it is possible to understand these symmetry groups combinatorially.
This could mean to identify a purely combinatorial object $\mathcal C$ whose combinatorial symmetry group $\Aut(\mathcal C)$ is isomorphic to $\Aut_{\GL}(P)$ \resp\ $\Aut_{\OO}(P)$ in a natural way.

For example, consider the edge-graph $G_P$ of the polytope.
Every, say, linear~symmetry $T\in\Aut_{\GL}(P)$ induces a distinct combinatorial symmetry $\sigma_T\in\Aut(G_P)$~of the edge-graph (see \cref{fig:hex_symmetry}).
We could state this as follows: the edge-graph is at least as symmetric as the polytope.
Usually however, it is strictly more symmetric and is~therefore~un\-suited for \enquote{capturing the polytope's symmetries} in our sense.
\begin{figure}[h!]
    \centering
    \includegraphics[width=0.43\textwidth]{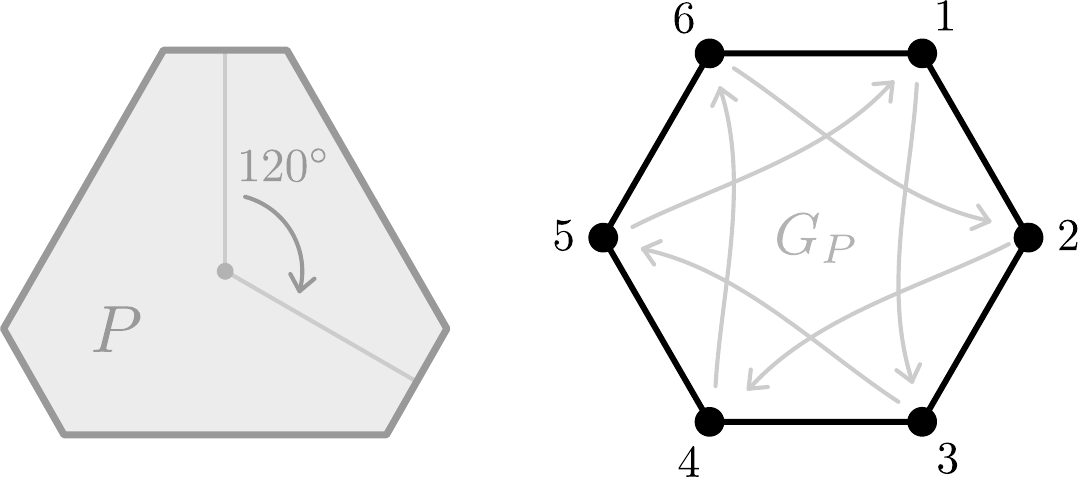}
    \caption{
    The clockwise $120^\circ$-ro\-tational symmetry of the hexagon permutes its vertices. 
    This permutation correspond to a \mbox{combinatorial~sym}\-metry $\sigma=(135)(246)$ of the edge-graph. 
    Not every combinatorial symmetry of $G_P$ comes from such a geometric symmetry, \eg\ $(123456)\in\Aut(G_P)$. The polygon is therefore strictly less symmetric than its edge-graph.
    }
    \label{fig:hex_symmetry}
\end{figure}

In this article we ask whether this can be fixed by coloring the vertices and edges of the edge-graph, thereby encoding further geometric information, and hopefully creating a combinatorial objects that is exactly as symmetric as $P$ (see \cref{fig:hex_symmetry_2}).\nolinebreak\space As we shall see, this is indeed possible.
\begin{figure}[h!]
    \centering
    \includegraphics[width=0.65\textwidth]{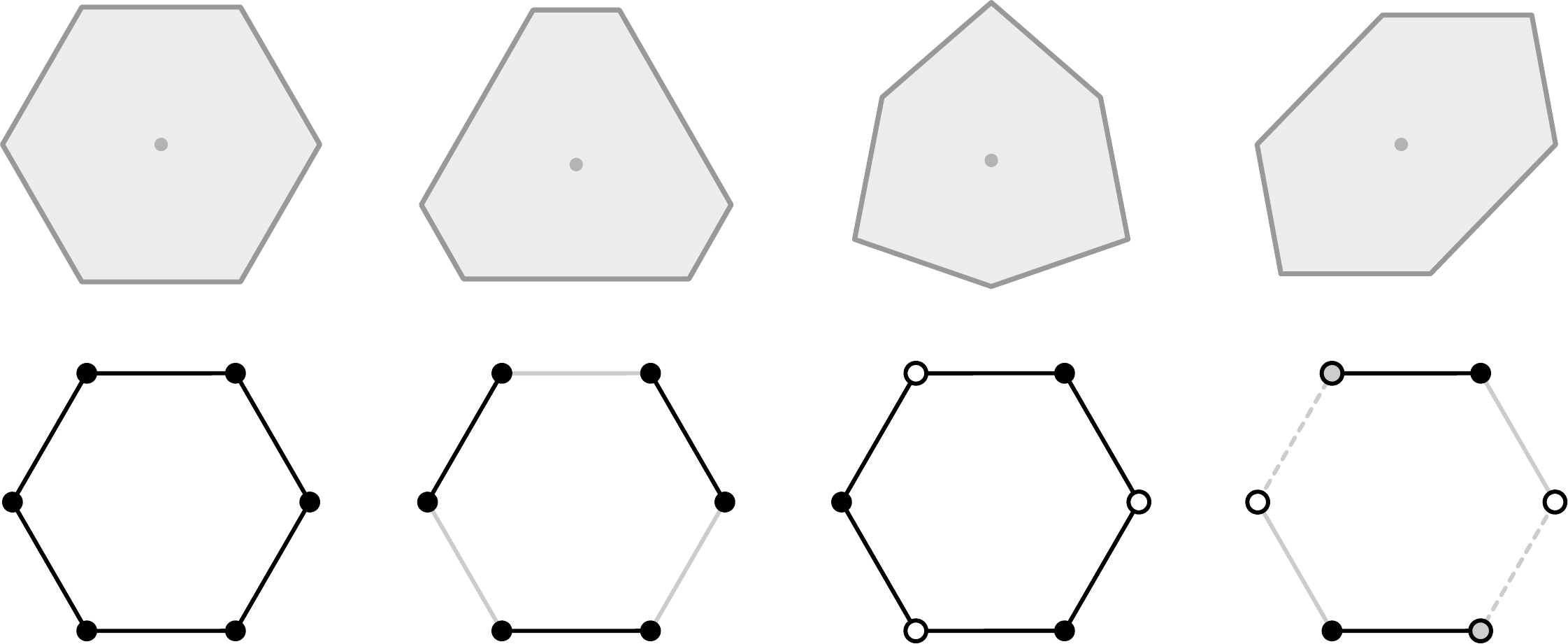}
    \caption{Various hexagons, and to each a coloring of its edge-graph that gives it \enquote{the same symmetries} as the polygon.}
    \label{fig:hex_symmetry_2}
\end{figure}

This should be surprising for at least two reasons.
First, it is established wisdom that the edge-graph of a general polytope in dimension $d\ge 4$ carries only~very little information about the polytope (a graph can be the edge-graph of several combinatorially distinct polytopes,\nolinebreak\space\mbox{potentially~of} different dimensions). 
Thus, whether the geomet\-ric symmetries of $P$ can~be~captured by coloring only the edges and vertices of $P$ (instead of, say, also higher dimensional faces) should be at~least~controversial.
Second, the same statement is actually wrong for more general geometric objects (such as graph embeddings, see \cref{ex:K_4_4}).
In fact, our proof for the~existence~of these colorings is based on a construction by Ivan Izmestiev \cite{izmestiev2010colin}, which relies heavily on the \emph{convexity} of $P$. Because of this, it is unclear whether our result generalizes to even some form of non-convex polytopes or polytopal complexes.

Our investigation is in part motivated from a result by Bremner et al.~\cite{bremner2014computing}: given a polytope $P\subset\RR^d$ with $n$ vertices, the authors construct a coloring of the complete graph $K_n$, so that the symmetry group of the colored graph is isomorphic to $\Aut_{\GL}(P)$ (\resp\ $\Aut_{\OO}(P)$; a more precise statement is given in \cref{ssec:metric_coloring}).~%
We can interpret this as follows:
if we are allowed to color not only the vertices and edges of $P$, but also other pairs of vertices without a direct counterpart in the po\-ly\-tope's combinatorics, then \enquote{capturing the polytope's symmetries} is indeed possible.
The major result of our article is then that coloring these \enquote{non-geometric edges} is not actually necessary.

We reiterate this introduction in a more formal manner.

\subsection{Notation and setting}
\label{ssec:setting}


Throughout the text we let $P\subset\RR^d$ denote a convex polytope that~is full-dimensional (\ie\ not con\-tained in any proper affine subspace of $\RR^d$) and~contains the origin in its interior (\ie\ $0\in\Int(P)$).

By $\F_\delta(P)$ we denote the set of $\delta$-dimensional faces of $P$. We assume a fixed enumeration $v_1,...,v_n\in\F_0(P)$ of the polytope's vertices.
In particular, $n$ will~always denote the number of the vertices.

The edge-graph of $P$ is the finite simple graph $G_P=(V,E)$ with~vertex set~$V=\{1,...,n\}$ and edge set $\smash{E\subseteq{V\choose 2}}$.
We implicitly assume that $i\in V$ corresponds~to~the vertex $v_i\in\F_0(P)$, and that $ij\in E$ (short for $\{i,j\}\in E$) if and only if $\conv\{v_i,v_j\}\in\F_1(P)$.


The (combinatorial) symmetry group of $G_P$%
\footnote{For convenience, notions like the symmetry group, colorings, the adjacency matrix, etc.\  are only introduced for the edge-graph, but it is understood that they apply to more general graphs as well.} 
is defined as
$$\Aut(G_P):=\{\sigma\in\Sym(V)\mid ij\in E\Leftrightarrow \sigma(i)\sigma(j)\in E\}\subseteq\Sym(V)\text{\footnotemark},$$
\footnotetext{$\Sym(V)$ denotes the \emph{symmetric group}, \ie\ the group of permutations of the set $V$.}
that is, the group of permutations of $V$ that fix the edge set of $G_P$.

A \emph{coloring} of $G_P$ is a map $\cc\:V\cupdot E\to\mathfrak C$ (it assign colors to both, vertices and edges), where $\mathfrak C$ denotes an abstract \emph{set of colors}.
The pair $(G_P,\cc)$ is then a \emph{colored edge-graph} and will be abbreviated by $G_P^\cc$.
Its combinatorial symmetry group is
$$\Aut(G_P^\cc):=\Big\{\sigma\in\Aut(G_P) \;\Big\vert {\small\begin{array}{rcll}
\cc(i)  &\!\!\!\!\!=\!\!\!\!\!&  \cc(\sigma(i)) &\text{for all $i\in V$ } \\
\cc(ij) &\!\!\!\!\!=\!\!\!\!\!&  \cc(\sigma(i)\sigma(j)) &\text{for all $ij\in E$}
\end{array}}\Big\}.$$
If $\sigma\in\Aut(G_P^\cc)$, we also say that $\sigma$ \emph{preserves} the coloring $\cc$.

The \emph{colored adjacency matrix} of $G_P^\cc$ is the matrix $A^\cc\in (\mathfrak C\cupdot\{0\})^{n\x n}$ with entries
$$A^\cc_{ij}:=\begin{cases} 
\cc(i) & \text{if $i=j$}\\
\cc(ij) & \text{if $ij\in E$}\\
0 & \text{otherwise}
\end{cases}.
$$
Clearly, a coloring is completely determined by the colored adjacency matrix, and we might occasionally use $A^\cc$ to define a coloring.

%



A geometric symmetry $T\in\Aut_{\GL}(P)$ of $P$ maps vertices of $P$ onto vertices~of~$P$ and thus describes a permutation of the vertex set.
Let $\sigma_T\in\Sym(V)$ be the~permutation of the vertex set of the edge-graph that permutes its vertices in the same way as $T$ permutes the vertices of $P$.
Formally, that is
\begin{equation}
\label{eq:sigma_T}
 T v_i = v_{\sigma_T(i)},\quad\text{for all $i\in V$}.
\end{equation}
Since $T$ also maps edges of $P$ onto edges of $P$, also $\sigma_T$ maps edges to edges, and so we see that $\sigma_T$ is a symmetry of the edge-graph, \ie\ $\sigma_T\in\Aut(G_P$).
%
The assignment $T\mapsto \sigma_T$ then defines a group homomorphism $\phi\:\Aut_{\GL}(P)\to\Aut(G_P)$ which we shall call the \emph{natural group homomorphism} of the polytope $P$. 

Since $P$ is full-dimensional, its vertices contain a basis of $\RR^d$, and it follows that $\phi$ must be \emph{injective}. 
In general however, $\phi$ is not an isomorphism and $\Aut_{\GL}(P)\not\cong \Aut(G_P)$, which is a formal way to say that the edge-graph $G_P$ can have many more symmetries than the polytope.

Our approach for rectifying this is to assign a coloring $\cc\:V\cupdot E\to\mathfrak C$ to the edge-graph $G_P$ with the hope that $\Aut_{\GL}(P)\cong\Aut(G_P^\cc)$. The natural candidate for the isomorphism between the groups is a colored version of the natural homomorphism:
\begin{equation}
\label{eq:natural_homo}
    \phi^\cc\:\Aut_{\GL}(P)\to\Aut(G_P^\cc),\;\,T\mapsto \sigma_T
\end{equation}
%
%

For this to work as desired, we need to check two things:
\begin{itemize}
    \item First, $\phi^\cc$ needs to be well-defined. This is not the case for each coloring: one needs to check that for each $T$ $\in$ $\Aut_{\GL}(P)$ the corresponding permutation $\sigma_T$ is indeed a symmetry of $G_P^\cc$ (that is, is in $\Aut(G_P^\cc)$). 
    Intuitively, this amounts to checking that the edge-graph, even after coloring, is still at \emph{least} as symmetric as $P$.
    \item Second, $\phi^\cc$ must have an inverse. If so, then $G_P^\cc$ is \emph{exactly} as symmetric as $P$. Providing such an inverse will go as follows: for each $\sigma\in\Aut(G_P^\cc)$ we need to construct a geometric symmetry $T_\sigma\in\Aut_{\GL}(P)$ with
    $$T_\sigma v_i = v_{\sigma(i)},\quad\text{for all $i\in V$}.$$
    Since $P$ is full-dimensional, if $T_\sigma$ exists then it is unique.
    The map $\sigma\mapsto T_\sigma$ is then the desired inverse.
\end{itemize}



The discussion also applies verbatim to the orthogonal symmetry group $\Aut_{\OO}(P)$, and we shall use the same notation $\phi^\cc\:\Aut_{\OO}(P)\to\Aut(G_P^\cc)$ to denote the natural homomorphism in this case. 

With this in place, we can formalize \enquote{capturing symmetries}:

\begin{definition}
\label{def:capturing_symmetries}
A coloring $\cc\:V\cupdot E\to\mathfrak C$ of $G_P$ is said to \emph{capture the linear (\resp\ orthogo\-nal) symmetries} of $P$ if $\Aut(G_P^\cc)\cong \Aut_{\GL}(P)$ (\resp\ $\Aut(G_P^\cc)\cong\Aut_{\OO}(P)$), where the isomorphism is realized by the natural homomorphism $\phi^\cc$.
\end{definition}


The main results of this article are explicit constructions for colorings that
\begin{itemize}
    \item capture linear symmetries (\cref{res:Izmestiev_works}).
    \item capture orthogonal symmetries (\cref{res:capturing_Euclidean_symmetries}).
\end{itemize}

\subsection{Overview}


In \cref{sec:colorings} we introduce the \emph{metric coloring} and the \emph{orbit color\-ing}, two very natural candidates for capturing certain polytopal symmetries.
In this section we do not yet show that either coloring capture linear or orthogonal symmetries, but we establish relevant properties used in the upcoming sections. 


In \cref{sec:linear_algebra_condition} we derive a sufficient condition for a coloring of the form $\cc\: V\cupdot E\to\RR$ (the colors are real numbers) to capture linear symmetries.
The criterion will~be in terms of the eigenspaces of the (colored) adjacency matrix of the edge-graph.\nolinebreak\space We shall call this the \enquote{linear algebra criterion}.

In \cref{sec:izemestiev} we introduce the \emph{Izmestiev coloring} (based on a construction by Ivan Izmestiev \cite{izmestiev2010colin}) and we show that it satisfies the \enquote{linear algebra criterion} from \cref{sec:linear_algebra_condition}.
We~thereby establish the existence of a first coloring that captures linear symmetries (\cref{res:Izmestiev_works}).
As a corollary we find that the orbit coloring captures linear symmetries as well (\cref{res:orbit_coloring_works}).

In \cref{sec:capturing_Euclidean_symmetries} we show that a combination of the Izmestiev coloring and the metric coloring captures orthogonal symmetries (\cref{res:capturing_Euclidean_symmetries}).

\section{Two useful colorings}
\label{sec:colorings}

This section is preliminary, in that it introduce two natural colorings of the~edge-graph, the \emph{metric coloring} and the \emph{orbit coloring}, without establishing either color\-ing as capturing polytopal symmetries. 
In fact, this is an open question for the~metric coloring (see \cref{q:metric_coloring}).
The orbit coloring captures polytopal symmetries, but we are not able to show this right away.
Both colorings will play a role in~the~up\-coming sections.


\Cref{fig:both_colorings} shows a polygon and its edge-graph with either coloring applied.



\begin{figure}[h!]
    \centering
    \includegraphics[width=0.5\textwidth]{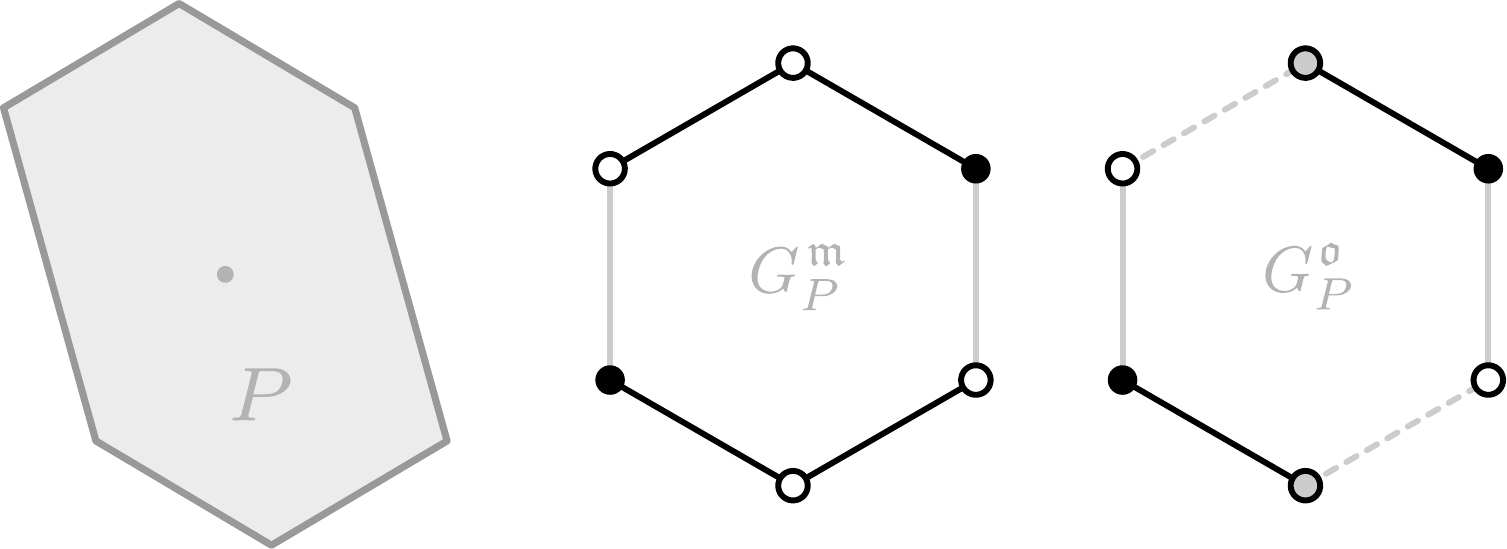}
    \caption{\mbox{A hexagon and its edge-graph colored with the metric~coloring} (middle, \cref{ssec:metric_coloring}) \resp\ the orbit coloring (right, \cref{ssec:orbit_coloring}).}
    \label{fig:both_colorings}
\end{figure}

\subsection{The metric coloring}
\label{ssec:metric_coloring}

Our first coloring is motivated from the previously~mentioned construction of Bremner et al. \cite{bremner2014computing} -- a coloring of the complete graph $K_n$ that \enquote{captures orthogonal symmetries}.
In our notation their result reads as follows:


\begin{theorem}[\!$\text{\cite[Theorem 2]{bremner2014computing}}$\footnote{This result is primarily based on \cite[Proposition 3.1]{bremner2009polyhedral}, but we found that its first explicit~formulation is in \cite{bremner2014computing}}\,]
\label{res:complete_coloring}
Given a polytope $P\subset\RR^d$ with vertex set $\F_0(P) = \{v_1,...,v_n\}$. 
\mbox{Consider} the coloring $\cc$ on the complete graph $K_n$ with 
\begin{align*}
\qquad\qquad\qquad\qquad \mathfrak c(i)&:=\|v_i\|^2,\quad &&\text{for all $i\in \{1,...,n\}$},\qquad\qquad\qquad\qquad\qquad\qquad\\
\qquad\qquad\qquad\qquad \mathfrak c(ij)&:=\<v_i,v_j\>,\quad &&\text{for all distinct $i,j\in \{1,...,n\}$}.\qquad\qquad\qquad\qquad\qquad\qquad
\end{align*}
Then $\Aut(K_n^\cc)\cong \Aut_{\OO}(P)$.
\end{theorem}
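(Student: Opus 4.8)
\section*{Proof proposal}

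The plan is to recognize the colored adjacency matrix of $K_n^\cc$ as the Gram matrix of the vertices, and thereby reduce the statement to one elementary linear-algebra fact: two tuples of vectors with the same Gram matrix differ by an orthogonal transformation. First I would record that, since $K_n$ has every pair of vertices as an edge, the colored adjacency matrix $A^\cc$ is exactly the Gram matrix $M := V^\top V \in \RR^{n\times n}$, where $V = [\,v_1,\dots,v_n\,]$ denotes the $d\times n$ matrix whose columns are the vertex coordinates: indeed $M_{ii} = \|v_i\|^2 = \cc(i)$ and $M_{ij} = \langle v_i,v_j\rangle = \cc(ij)$. Consequently, a permutation $\sigma\in\Sym(V)$ lies in $\Aut(K_n^\cc)$ if and only if $M_{\sigma(i)\sigma(j)} = M_{ij}$ for all $i,j$ (the constraint $\sigma\in\Aut(K_n)$ being vacuous), which I would rewrite as $\Pi_\sigma^\top M\,\Pi_\sigma = M$ for the permutation matrix $\Pi_\sigma$ with $\Pi_\sigma e_i = e_{\sigma(i)}$, equivalently $(V\Pi_\sigma)^\top(V\Pi_\sigma) = V^\top V$.

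Next I would dispatch the easy inclusion: the natural homomorphism $\phi^\cc\colon\Aut_{\OO}(P)\to\Aut(G_P) = \Aut(K_n)$, $T\mapsto\sigma_T$, actually takes values in $\Aut(K_n^\cc)$. This is immediate, since if $T$ is orthogonal and $Tv_i = v_{\sigma_T(i)}$, then $\langle v_{\sigma_T(i)},v_{\sigma_T(j)}\rangle = \langle Tv_i,Tv_j\rangle = \langle v_i,v_j\rangle$, so $\sigma_T$ preserves $\cc$. Injectivity of $\phi^\cc$ is clear from full-dimensionality, as $T$ is determined by its values on the spanning set $\{v_1,\dots,v_n\}$.

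The heart of the argument is inverting $\phi^\cc$: given $\sigma\in\Aut(K_n^\cc)$, I must produce an orthogonal $T_\sigma$ with $T_\sigma v_i = v_{\sigma(i)}$ for all $i$. Put $W := V\Pi_\sigma = [\,v_{\sigma(1)},\dots,v_{\sigma(n)}\,]$; by the first paragraph $W^\top W = V^\top V$. I would then show this forces the existence of $T\in\OO(\RR^d)$ with $TV = W$: pick $d$ columns of $V$ forming a basis of $\RR^d$ (possible since $P$ is full-dimensional), define $T$ on them by $v_{i_k}\mapsto v_{\sigma(i_k)}$, and extend linearly. Well-definedness, i.e.\ that $Tv_i = v_{\sigma(i)}$ for \emph{every} $i$ and not just the chosen basis indices, follows from the observation $\ker V = \ker W$ (because $Vx = 0\iff x^\top V^\top Vx = 0\iff x^\top W^\top Wx = 0\iff Wx = 0$): any linear relation $v_i = \sum_k\lambda_k v_{i_k}$ corresponds to a vector in $\ker V = \ker W$, which yields $v_{\sigma(i)} = \sum_k\lambda_k v_{\sigma(i_k)}$. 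Orthogonality then drops out, since $\langle Tv_i,Tv_j\rangle = \langle v_{\sigma(i)},v_{\sigma(j)}\rangle = M_{\sigma(i)\sigma(j)} = M_{ij} = \langle v_i,v_j\rangle$ and the $v_i$ span $\RR^d$, so $T$ preserves the inner product and is in particular surjective. Finally $T_\sigma P = \conv\{v_{\sigma(1)},\dots,v_{\sigma(n)}\} = P$, hence $T_\sigma\in\Aut_{\OO}(P)$, and $\sigma\mapsto T_\sigma$ is a two-sided inverse of $\phi^\cc$ by uniqueness of $T_\sigma$ together with distinctness of the vertices.

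I expect the only genuinely delicate point to be the well-definedness step — transporting linear dependences among the $v_i$ to linear dependences among the $v_{\sigma(i)}$ via the identity $\ker V = \ker W$; everything else is bookkeeping. One could alternatively route this through the (reduced) singular value decomposition of $V$ and $W$, or a polar-decomposition argument, but the $\ker V = \ker W$ approach appears shortest and keeps the proof fully elementary.
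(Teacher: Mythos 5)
Your argument is correct. Be aware, though, that the paper contains no proof of \cref{res:complete_coloring}: it is imported verbatim from Bremner et al.\ \cite{bremner2014computing}, and the only related derivation given in the paper is the observation (\cref{ex:easy_wrong_example}) that the linear-algebra criterion \cref{res:criterion} applied to $A^\cc=\Phi^\dagger\Phi$ recovers the $\Aut_{\GL}$ variant, \cref{res:complete_coloring_linear}~\itm2 --- not the orthogonal statement you were asked to prove. Your route is the natural direct one: identify $A^\cc$ with the Gram matrix $\Phi\T\Phi$, observe that $\sigma\in\Aut(K_n^\cc)$ is exactly the identity $(\Phi\Pi_\sigma)\T(\Phi\Pi_\sigma)=\Phi\T\Phi$, and invoke the elementary fact that two spanning tuples with equal Gram matrices differ by an orthogonal map. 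You correctly isolate the one delicate step --- that the map $T$ defined on a basis of columns sends \emph{every} $v_i$ to $v_{\sigma(i)}$ --- and your $\ker\Phi=\ker(\Phi\Pi_\sigma)$ argument settles it; the remaining points (well-definedness of $\phi^\cc$, injectivity from full-dimensionality, $T_\sigma P=P$, and the two maps being mutually inverse) are all handled. It is worth noting that the same two ingredients reappear in the paper's own orthogonal result, \cref{res:capturing_Euclidean_symmetries}: there a linear $T_\sigma$ is first produced by entirely different means (the Izmestiev coloring) and only afterwards shown to be orthogonal by precisely your ``inner products are preserved on a spanning set'' step. Your proof shows that for the \emph{complete} graph the Gram matrix alone supplies both the linear map and its orthogonality, using nothing beyond full-dimensionality --- which is consistent with the paper's point that the difficulty of its main theorems lies entirely in restricting the coloring to the actual edges of $P$.
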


The strength of this result lies in its immediate applicability: constructing this \enquote{complete metric coloring} requires no knowledge of the edge-graph (which is usually hard to come by), but only the vertex coordinates of $P$\,\footnote{If $P$ is given in $\mathcal H$-representation, one can apply \cref{res:complete_coloring} to compute the orthogonal~symmetry group of the dual polytope $P^\circ$, which is identical to $\Aut_{\OO}(P)$ as a matrix group.}.
In practice, this~is~probably the best tool for an explicit computation of $\Aut_{\OO}(P)$.
%
%


From a theoretical and aesthetic perspective however, this construction has the flaw of containing massively redundant data and stepping outside the combinatorial structure of the polytope (we assign color to vertex-pairs that are not~edges~of~the~po\-lytope).
Naturally, we can ask whether one can get away with coloring fewer of~these \enquote{non-edges}, ideally only the actual edges of the edge-graph.


Based on this hope, we define the following:

\begin{definition}
\label{def:metric_coloring}
The \emph{metric coloring} of $G_P$ is the coloring $\mathfrak m\:V\cupdot E\to \RR$\,\footnote{
A coloring whose colors are real numbers is still a \emph{purely combinatorial objects}.
These numbers are just used for a concise definition and could be replaced by any other finite set of distinguishable values. 
The only information used from the coloring (in the form of the combinatorial symmetry group of the colored graph) is whether two vertices/edges receive the same or a different color.
} with
%
\begin{align*}
\qquad\qquad\qquad\qquad \mathfrak m(i)&:=\|v_i\|^2,\quad &&\text{for all $i\in V$},\qquad\qquad\qquad\qquad\qquad\qquad\\
\qquad\qquad\qquad\qquad \mathfrak m(ij)&:=\<v_i,v_j\>,\quad &&\text{for all $ij\in E$}.\qquad\qquad\qquad\qquad\qquad\qquad
\end{align*}
%
\end{definition}

Whether the metric coloring captures orthogonal symmetries is an open question (see also \cref{q:metric_coloring}).
Our reason for introducing it anyway is that in \cref{sec:capturing_Euclidean_symmetries} the metric coloring will be one ingredient to a coloring that indeed captures orthogonal symmetries.

We close this section with another formulation of \cref{res:complete_coloring} that also allows for capturing linear symmetries  (in fact, this is closer to the original formulation in \cite{bremner2014computing}).
Note that the complete metric coloring of $K_n$ in \cref{res:complete_coloring} can also be described by its colored adjacency matrix $A^\cc = \Phi\T\Phi$, where $\Phi:=(v_1,...,v_n)\in\RR^{d\x n}$ is~the matrix in which the vertex coordinates of $P$ appear as columns.

\begin{theorem}[Another formulation of \!$\text{\cite[Theorem 2]{bremner2014computing}}$]
\label{res:complete_coloring_linear}
Let $\cc$ be a coloring of the complete graph $K_n$ with colored adjacency matrix $A^\cc$:
\begin{myenumerate}
    \item if $A^\cc=\Phi\T\Phi$, then $\Aut(K_n^\cc)\cong\Aut_{\OO}(P)$ (this is exactly \cref{res:complete_coloring}).
    \item if $A^\cc=\Phi^\dagger\Phi$\,\footnote{$\Phi^\dagger\in\RR^{n\x d}$ denotes the \emph{Moore-Penrose pseudo inverse} of $\Phi$, that is, $\Phi\Phi^\dagger=\Id_d$.}, then $\Aut(K_n^\cc)\cong\Aut_{\GL}(P)$.
\end{myenumerate}
\end{theorem}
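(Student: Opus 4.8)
Part (i) is \cref{res:complete_coloring} verbatim, so the plan is to deduce part (ii) from it by a linear change of coordinates that brings the vertices of $P$ into isotropic position. Since $P$ is full-dimensional, the vectors $v_1,\dots,v_n$ span $\RR^d$, hence $Q:=\Phi\Phi\T=\sum_{i} v_iv_i\T\in\RR^{d\x d}$ is symmetric positive definite and $\Phi^\dagger=\Phi\T Q^{-1}$; therefore
\[
A^\cc=\Phi^\dagger\Phi=\Phi\T Q^{-1}\Phi.
\]
Choosing any $L\in\GL(\RR^d)$ with $L\T L=Q^{-1}$ (e.g.\ $L=Q^{-1/2}$) and setting $\Psi:=L\Phi$, a one-line computation gives $\Psi\T\Psi=A^\cc$ and $\Psi\Psi\T=LQL\T=\Id_d$.

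Now $\Psi$ is exactly the vertex-coordinate matrix of the linear image $P':=LP$, whose vertices are the columns $w_i:=Lv_i$ of $\Psi$, and $A^\cc=\Psi\T\Psi$ is the colored adjacency matrix of the complete metric coloring of $P'$. Hence part (i), applied to $P'$ in place of $P$, gives $\Aut(K_n^\cc)\cong\Aut_{\OO}(P')$ (via the natural homomorphism of $P'$). The key step is then the observation that $P'$, being in isotropic position, satisfies $\Aut_{\OO}(P')=\Aut_{\GL}(P')$: the inclusion $\subseteq$ is trivial, and if $T\in\Aut_{\GL}(P')$ induces the vertex permutation $\sigma$ (so $Tw_i=w_{\sigma(i)}$) then
\[
TT\T=T\Big(\sum_{i} w_iw_i\T\Big)T\T=\sum_{i} w_{\sigma(i)}w_{\sigma(i)}\T=\sum_{j} w_jw_j\T=\Psi\Psi\T=\Id_d,
\]
so $T\in\OO(\RR^d)$. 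Finally $\Aut_{\GL}(P')=\Aut_{\GL}(LP)=L\,\Aut_{\GL}(P)\,L^{-1}\cong\Aut_{\GL}(P)$ by conjugation with $L$, and composing these three isomorphisms yields $\Aut(K_n^\cc)\cong\Aut_{\GL}(P)$.

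I expect the isotropic-position identity $\Aut_{\OO}(P')=\Aut_{\GL}(P')$ to be the only real content; everything else is bookkeeping. A minor point to be careful with (if one wants the stronger statement that the coloring \emph{captures} linear symmetries in the sense of \cref{def:capturing_symmetries}) is to verify that the composite isomorphism above is again the natural homomorphism $\phi^\cc$ of $(K_n,\cc)$. This follows by tracing $T\in\Aut_{\GL}(P)$ through the chain: $T\mapsto LTL^{-1}\mapsto$ the permutation $\sigma$ with $(LTL^{-1})w_i=w_{\sigma(i)}$, and since $(LTL^{-1})w_i=LTv_i=Lv_{\sigma_T(i)}=w_{\sigma_T(i)}$ this $\sigma$ is $\sigma_T$, as required. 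Alternatively, one can avoid invoking \cref{res:complete_coloring} altogether: $\sigma\in\Aut(K_n^\cc)$ says precisely that the bijection $v_i\mapsto v_{\sigma(i)}$ of the spanning set $\{v_i\}$ preserves the bilinear form $(x,y)\mapsto x\T Q^{-1}y$, hence extends uniquely to $T_\sigma\in\GL(\RR^d)$ with $T_\sigma v_i=v_{\sigma(i)}$ and thus $T_\sigma P=P$; and $TQT\T=\sum_i (Tv_i)(Tv_i)\T=Q$ for $T\in\Aut_{\GL}(P)$ shows $\phi^\cc$ is well defined on $(K_n,\cc)$.
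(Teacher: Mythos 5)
Your proof is correct, but it takes a different route from the paper. The paper does not reduce \itm2 to \itm1 at all: it defers the proof of \itm2 to \cref{ex:easy_wrong_example}, where the ``linear algebra criterion'' (\cref{res:criterion}) is applied directly to the complete graph with colored adjacency matrix $A^\cc=\Phi^\dagger\Phi$. Since $\Phi^\dagger\Phi$ is the orthogonal projector onto $U=\Span\Phi^\dagger$, the subspace $U$ is its eigenvalue-$1$ eigenspace, and the criterion immediately yields that $\cc$ captures linear symmetries (the well-definedness hypothesis of the criterion is exactly your closing observation that $TQT\T=Q$ for $T\in\Aut_{\GL}(P)$, i.e.\ that $\sigma_T$ preserves the Gram matrix $\Phi\T Q^{-1}\Phi$). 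Your argument instead normalizes $P$ to isotropic position via $L$ with $L\T L=Q^{-1}$, invokes \itm1 for $P'=LP$, and uses the identity $\Aut_{\OO}(P')=\Aut_{\GL}(P')$ in isotropic position; this is sound, including the check that the composite isomorphism is the natural homomorphism $\phi^\cc$ of $P$, and your final self-contained variant (extending a Gram-preserving bijection of a spanning set to a linear map) is also fine. What each approach buys: yours explains conceptually \emph{why} the pseudo-inverse Gram matrix sees exactly the linear symmetries (it is the ordinary Gram matrix after an isotropic normalization) and needs only \itm1 as input, whereas the paper's route showcases that its eigenspace criterion, developed for the Izmestiev coloring of the edge-graph, applies verbatim to the complete graph and thus subsumes this classical result with no extra work.
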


A proof for part \itm2 will also follow from the theory developed in \cref{sec:linear_algebra_condition} (see \cref{ex:easy_wrong_example})

\subsection{The orbit coloring}
\label{ssec:orbit_coloring}

The next coloring is motivated from the following~con\-sideration:
%
suppose that we are given two vertices $v_i,v_j\in\F_0(P)$ in the same~orbit \wrt\ $\Aut_{\GL}(P)$, which just means that there is a $T\in\Aut_{\GL}(P)$ with \mbox{$Tv_i = v_j$}.~%
The corresponding combinatorial symmetry $\sigma_T\in\Aut(G_P)$ satisfies $\sigma_T(i)=j$.~%
If~now $\cc\:V\cupdot E\to\mathfrak C$ is a coloring that captures linear symmetries%
, then $\sigma_T$ preserves the coloring $\cc$ and we have $\cc(j)=\cc(\sigma_T(i))=\cc(i)$.
\mbox{We can summarize} this as follows:~if~$\cc$ is supposed to capture linear symmetries, then vertices in the same $\Aut_{\GL}(P)$-or\-bit of $P$ must have the same color in $G_P^\cc$. %
With an analogous~argument we see that the same holds for edges.




Having identified this first necessary condition for capturing symmetries, we can consider the \enquote{simplest} coloring that follows this idea:

\begin{definition}
\label{def:orbit_coloring}
The (\emph{linear}) \emph{orbit coloring} $\mathfrak o$ of $G_P$ assigns the same color to ver\-ti\-ces (\resp\ edges) of $G_P$ if and only if the corresponding vertices (\resp\ edges) of~$P$ are in the same $\Aut_{\GL}(P)$-orbit. 
%
\end{definition}

An analogous coloring can be defined for orthogonal symmetries, which we shall call the \emph{orthogonal orbit coloring} of $G_P$, still denoted by $\oo$. 
For the sake of conciseness, this section only discusses the (linear) orbit coloring, but all statements carry over to the orthogonal version in the obvious way.


As we shall learn in \cref{sec:izemestiev} (see \cref{res:orbit_coloring_works}), the orbit coloring indeed~captures linear symmetries.
However, this is surprisingly hard to show directly.
In~fact, our eventual proof of this will \enquote{just} use the following:

\iftrue

\begin{lemma}
\label{res:orbit_coloring_finest}
If there is any coloring that captures linear symmetries, then so does the orbit coloring $\oo$.
%
%
\begin{proof}
Suppose that $\cc$ is a coloring that captures linear symmetries, in particular,~$\phi^\cc$ is an isomorphism.
Our proof that $\oo$ captures linear symmetries as well is based on two simple observations:
\begin{myenumerate}
    \item the natural homomorphism $\phi^\oo$ is well-defined (that is, $G_P^\oo$ is at least as~symmetric as $P$), and
    \item $\Aut(G_P^\oo)\subseteq\Aut(G_P^\cc)$.
\end{myenumerate}
Showing either is straight-forwarded, but for the sake of completeness, both proofs are included below. 
Now, presupposing both, we can write down~the following~chain of groups in which the first and the last group are the same:
$$
\begin{tikzcd}
\mathrm{Aut}(G_P^{\mathfrak c}) \arrow[r, "(\phi^\cc)^{-1}"] & \mathrm{Aut}_{\mathrm{GL}}(P) \arrow[r, "\phi^{\mathfrak o}"] & \mathrm{Aut}(G_P^{\mathfrak o}) \arrow[r, hook, "\text{\itm2}"] & \mathrm{Aut}(G_P^{\mathfrak c})
\end{tikzcd}\,.
$$
Since all maps are injective, and the groups are finite, all maps must actually~be~iso\-morphisms.
Thus, $\phi^{\mathfrak o}$~is~an~iso\-morphism and $\oo$ captures linear symmetries.
This~concludes the proof, and it remains to verify \itm1 and \itm2.

Proof of \itm1: let $T\in\Aut_{\GL}(P)$ be a linear symmetry of $P$ with corresponding combinatorial symmetry $\sigma_T\in\Aut(G_P)$. 
We need to show that $\sigma_T\in\Aut(G_P^\oo)$.
For this, we observe that for each $i\in V$ the vertices $v_i$ and $v_{\sigma_T(i)}=T v_i$ belong~to~the same $\Aut_{\GL}(P)$-orbit of $P$.
By the definition of the orbit coloring, $i$ and $\sigma_T(i)$ have then the same color in $G_P^\oo$.
Thus, $\sigma_T$ preserves the vertex colors~of $\oo$.~%
Analogous\-ly, one shows that $\sigma_T$ preserves edge colors.
Thus, $\sigma_T\in\Aut(G_P^{\mathfrak o})$.

Proof of \itm2: let $\sigma\in\Aut(G_P^\oo)$ be a permutation that preserves the orbit coloring. We need to show $\sigma\in\Aut(G_P^\cc)$.
For this, we observe that for all $i\in V$ the~vertices $i$ and $\sigma(i)$ have the same color in $G_P^\oo$, which just means (by \cref{def:orbit_coloring}) that $v_i,v_{\sigma(i)}\in\F_0(P)$ are in the~same $\Aut_{\GL}(P)$-orbit of $P$. 
Repeating the argument of the introductory paragraph to~this section we see that $\cc(i)=\cc(\sigma(i))$.
An analogous argument holds for edges.
In~other words,~$\sigma$ preserves the coloring $\cc$, and hence $\sigma\in\Aut(G_P^\cc)$.
\end{proof}
\end{lemma}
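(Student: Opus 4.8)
The plan is to show that if \emph{some} coloring $\cc$ captures linear symmetries, then the orbit coloring $\oo$ does too, by chaining together a short sequence of injective group homomorphisms whose composite is the identity (or at least an endomorphism of a finite group), and observing that each must therefore be an isomorphism. The two ingredients needed are: (\textit{i}) the natural homomorphism $\phi^\oo\colon\Aut_{\GL}(P)\to\Aut(G_P^\oo)$ is well-defined, i.e.\ every $\sigma_T$ preserves the orbit coloring; and (\textit{ii}) $\Aut(G_P^\oo)\subseteq\Aut(G_P^\cc)$, which encodes the intuition that the orbit coloring is the \emph{finest} coloring a symmetry-faithful coloring could be, in the sense that preserving orbits forces preservation of $\cc$'s colors.

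For (\textit{i}): given $T\in\Aut_{\GL}(P)$ with associated permutation $\sigma_T$, for each vertex $i$ the polytope-vertices $v_i$ and $v_{\sigma_T(i)} = Tv_i$ lie in the same $\Aut_{\GL}(P)$-orbit, so by definition of $\oo$ they get the same vertex color; similarly each edge $ij$ maps to the edge $\sigma_T(i)\sigma_T(j)$ in the same orbit, so edge colors are preserved. Hence $\sigma_T\in\Aut(G_P^\oo)$. For (\textit{ii}): take $\sigma\in\Aut(G_P^\oo)$. For each $i$, vertices $i$ and $\sigma(i)$ carry the same $\oo$-color, so $v_i$ and $v_{\sigma(i)}$ lie in one $\Aut_{\GL}(P)$-orbit, meaning there is a $T\in\Aut_{\GL}(P)$ with $Tv_i = v_{\sigma(i)}$; since $\cc$ captures linear symmetries, $\sigma_T\in\Aut(G_P^\cc)$ preserves $\cc$, whence $\cc(i)=\cc(\sigma_T(i))=\cc(\sigma(i))$. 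The same reasoning on edges (using that an orbit-preserving $\sigma$ sends each edge to an edge in the same orbit, realized by some linear symmetry) gives $\cc(ij)=\cc(\sigma(i)\sigma(j))$, so $\sigma\in\Aut(G_P^\cc)$. This is essentially a repetition of the "necessary condition" argument in the paragraph opening \cref{ssec:orbit_coloring}.

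With (\textit{i}) and (\textit{ii}) in hand, I would write the chain
$$
\Aut(G_P^\cc)\xrightarrow{\;(\phi^\cc)^{-1}\;}\Aut_{\GL}(P)\xrightarrow{\;\phi^\oo\;}\Aut(G_P^\oo)\hookrightarrow\Aut(G_P^\cc),
$$
where the first arrow is an isomorphism by hypothesis, the second is a well-defined injective homomorphism by (\textit{i}) (injectivity is automatic since $\phi$ is always injective on $\Aut_{\GL}(P)$, the polytope being full-dimensional), and the third is the inclusion from (\textit{ii}). All groups are finite, all maps injective, and the composite is an injective endomorphism of the finite group $\Aut(G_P^\cc)$, hence bijective; so every map in the chain is a bijection. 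In particular $\phi^\oo$ is an isomorphism, which is exactly the statement that $\oo$ captures linear symmetries. The orthogonal version follows verbatim with $\Aut_{\OO}(P)$ in place of $\Aut_{\GL}(P)$.

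I do not anticipate a genuine obstacle here — this is a soft, formal argument. The only point requiring minor care is making sure the "same orbit $\Rightarrow$ realized by a genuine linear symmetry" step is applied at the level of \emph{edges} as well as vertices in the proof of (\textit{ii}): one should note that if $v_i,v_{\sigma(i)}$ are in the same orbit and $v_j, v_{\sigma(j)}$ are in the same orbit it does \emph{not} immediately follow that the \emph{pair} $\{v_i,v_j\}$ and $\{v_{\sigma(i)},v_{\sigma(j)}\}$ are in the same orbit of edges; rather, one invokes that $\oo$ assigns edges the same color iff they lie in one orbit, so $\oo(ij)=\oo(\sigma(i)\sigma(j))$ directly yields a single $T$ with $T\{v_i,v_j\}=\{v_{\sigma(i)},v_{\sigma(j)}\}$, and then $\cc(ij)=\cc(\sigma(i)\sigma(j))$ as before. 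Everything else is bookkeeping with finite groups and injective maps.
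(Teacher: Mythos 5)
Your proposal is correct and follows essentially the same route as the paper: the same two observations (well-definedness of $\phi^\oo$ and the inclusion $\Aut(G_P^\oo)\subseteq\Aut(G_P^\cc)$) followed by the same chain of injective homomorphisms on finite groups. The extra care you take with edge orbits in step (\textit{ii}) matches what the paper handles with its ``analogous argument for edges.''
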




\else

\begin{proposition}
\label{res:orbit_coloring_finest}
If any coloring captures linear symmetries, then so does the orbit coloring.
In that case, the orbit coloring is the finest coloring that \mbox{captures~linear~sym}\-metries.
\begin{proof}
By \cref{res:coarsening_orbit_coloring_makes_well_defined_homomorphism}, the orbit coloring $\mathfrak o$ comes with a well-defined natural homomorphism $\phi^{\mathfrak o}\:\Aut_{\GL}(P)\to\Aut(G_P^{\mathfrak o})$.



If $\cc$ is any coloring that captures linear symmetries, then we obtain the following cyclic chain of groups:
$$
\begin{tikzcd}
\mathrm{Aut}(G_P^{\mathfrak c}) \arrow[r, "\cong"] & \mathrm{Aut}_{\mathrm{GL}}(P) \arrow[r, "\phi^{\mathfrak o}"] & \mathrm{Aut}(G_P^{\mathfrak o}) \arrow[r, hook, "\eqref{eq:containment}"] & \mathrm{Aut}(G_P^{\mathfrak c})
\end{tikzcd}
$$
Since all maps are injective, they must actually be isomorphisms, including $\phi^{\mathfrak o}$.

By \cref{res:must_coarsen}, the orbit coloring must then be the finest coloring that~captures linear symmetries.
\end{proof}
\end{proposition}

\fi

\section{A linear algebra condition for capturing symmetries}
\label{sec:linear_algebra_condition}

For this section, fix a coloring $\cc\:V\cupdot E\to\mathfrak C$ for which $G_P^\cc$ is at least as symmetric as $P$. Then $\phi^\cc\:\Aut_{\GL}(P)\to\Aut(G_P^\cc)$ is well-defined.
The goal of this section~is~to derive a sufficient criterion for $\cc$ to capture linear symmetries.

Recall that this amounts to showing that $\phi^\cc$ is an isomorphism.
In other words, the desired criterion must ensure that for each $\sigma\in\Aut(G_P^\cc)$ we can find a linear symmetry $T_\sigma\in\Aut_{\GL}(P)$ with
\begin{equation}
\label{eq:isomorphism_separate}
T_\sigma v_i = v_{\sigma(i)},\quad\text{for all $i\in V$}.
\end{equation}
Let us investigate the difficulties in constructing these transformations.

%
%
%
%
%
%
First, note that we can express \eqref{eq:isomorphism_separate} for all $i\in V$ simultaneously by rewriting it into a single matrix equation as follows:
%
%
$$T_\sigma(v_1,...,v_n)=(v_{\sigma(1)},...,v_{\sigma(n)}) = (v_1,...,v_n)\Pi_\sigma,$$
where $\Pi_\sigma\in\Perm(n)$ denotes the corresponding permutation matrix\footnote{We chose to define $\Pi_\sigma$ so that on multiplication from \emph{left} it permutes the \emph{rows} as prescribed by $\sigma$. We emphasize that this, counter-intuitively, means $(\Pi_\sigma v)_i=v_{\sigma^{-1}(i)}$ for a vector $v\in\RR^n$.}. 
If we define $\Phi$ $:=$ $(v_1,...,v_n)\in\RR^{d\x n}$ as the matrix in which the polytope's vertices $v_i$ appear~as columns, this further compactifies to
\begin{equation}
\label{eq:equivariant}
T_\sigma \Phi=\Phi\Pi_\sigma.
\end{equation}
This equation will be our benchmark: every ansatz for how to define the transforma\-tions $T_\sigma$ must satisfy \eqref{eq:equivariant}, which is then also sufficient.

%
%
Now, if $\Phi$ were invertible, we could just solve \eqref{eq:equivariant} for $T_\sigma$, satisfying \eqref{eq:equivariant} \enquote{by force}.
However, $\Phi\in\RR^{d\x n}$ is not a square matrix (since $P$ \mbox{is full-dimen}\-sio\-nal,\nolinebreak\space we have $n\ge d+1$).\nolinebreak\space
%
%
Instead, one naive hope to still \enquote{solve for $T_\sigma$} is to use the \emph{Moore-Penrose pseudo inverse} of $\Phi$:~the unique matrix~$\Phi^\dagger$ $\in$ $\RR^{n\x d}$ with  $\Phi\Phi^\dagger=\Id_d$ (the rows of $\Phi^\dagger$ form a dual basis to the columns of $\Phi$).
And so we make the~following~an\-satz:
\begin{equation}
\label{eq:def_T}
T_\sigma:=\Phi\Pi_\sigma\Phi^\dagger.
\end{equation}
%

It remains to investigate under which conditions this ansatz satisfies \eqref{eq:equivariant}. We compute
\begin{equation}
\label{eq:subst_T}
 T_\sigma \Phi \overset{\eqref{eq:def_T}} = \Phi\Pi_\sigma\Phi^\dagger \Phi = \Phi\Pi_\sigma \pi_U,
\end{equation}
where $\pi_U:=\Phi^\dagger\Phi$ is the orthogonal projector onto the subspace $U:=\Span\Phi^\dagger\subseteq\RR^n$.
Apparently, to arrive at \eqref{eq:equivariant}, we would need to get rid of the projector~$\pi_U$~on~the right side of \eqref{eq:subst_T}.
And so we see that one possible sufficient criterion for our~construction of the $T_\sigma$ to work (and thus, for $\cc$ to capture linear symmetries) would be $\Phi\Pi_\sigma\pi_U = \Phi\Pi_\sigma$ for all $\sigma\in\Aut(G_P^\cc)$.

This is still a rather cumbersome criterion to apply.
The main result of this~section is then to reformulate this in terms of the adjacency matrix of $G_P^\cc$.



\begin{theorem}
\label{res:criterion}
Let $\cc\:V\cupdot E\to\RR$ be a coloring of the edge-graph $G_P$ so that $G_P^\cc$ is at least as symmetric as $P$.
If $U:=\Span \Phi^\dagger$ is an eigenspace of the colored~adjacency matrix $A^\cc$,
then $\cc$ captures the linear symmetries of $P$.
\begin{proof}
Fix a combinatorial symmetry $\sigma\in\Aut(G_P^\cc)$.

We use the following well-known (and easy to verify) property of the colored~adjacency matrix: if $\sigma\in\Aut(G_P^\cc)$, then
$$\Pi_\sigma A^\cc = A^\cc \Pi_\sigma.$$
%
Now, if $A^\cc$ and $\Pi_\sigma$ commute, then the eigenspaces of $A^\cc$ (including $U$) are \emph{invariant subspaces} of $\Pi_\sigma$, \ie\ $\Pi_\sigma U= U$.
Equivalently, $\Pi_\sigma$ commutes with the projec\-tor~$\pi_U$.
This suffices to show that the map $T_\sigma:=\Phi\Pi_\sigma\Phi^\dagger$ satisfies \eqref{eq:equivariant}:
\vspace{-0.5ex}
$$T_\sigma\Phi 
=\Phi\Pi_\sigma\Phi^\dagger\Phi =
\Phi\Pi_\sigma \pi_U=\Phi\pi_U \Pi_\sigma 
= \Phi(\Phi^\dagger \Phi) \Pi_\sigma 
= \overbrace{(\Phi\Phi^\dagger)}^{\Id_d} \Phi \Pi_\sigma 
= \Phi \Pi_\sigma.$$

Therefore, the map $\sigma\mapsto T_\sigma$ defines the desired inverse of $\phi^\cc$, and $\cc$ captures the linear symmetries of $P$.
\end{proof}
\end{theorem}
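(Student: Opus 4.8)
The plan is to take over the ansatz $T_\sigma := \Phi\Pi_\sigma\Phi^\dagger$ from \eqref{eq:def_T} and show that the hypothesis on $U$ is exactly what makes it satisfy the benchmark \eqref{eq:equivariant}. Since $\phi^\cc$ is injective (being a restriction of the natural homomorphism) and the groups are finite, it is enough to produce, for every $\sigma\in\Aut(G_P^\cc)$, a transformation $T_\sigma\in\Aut_{\GL}(P)$ with $T_\sigma v_i = v_{\sigma(i)}$ for all $i$; by uniqueness of such a $T_\sigma$ the assignment $\sigma\mapsto T_\sigma$ is then forced to be the inverse of $\phi^\cc$.

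The argument hinges on one standard fact: a permutation $\sigma$ lies in $\Aut(G_P^\cc)$ if and only if its permutation matrix commutes with the colored adjacency matrix, $\Pi_\sigma A^\cc = A^\cc\Pi_\sigma$. Granting this, the key step is to pass from ``$\Pi_\sigma$ commutes with $A^\cc$'' to ``$\Pi_\sigma$ commutes with the projector $\pi_U = \Phi^\dagger\Phi$''. For this I would use that $A^\cc$ is real symmetric (the colors are real and edges are unordered), so by the spectral theorem its eigenvalue-projectors are polynomials in $A^\cc$; the eigenvalue-projector onto the eigenvalue whose eigenspace is $U$ therefore commutes with $\Pi_\sigma$, and it coincides with $\pi_U$ because for a symmetric matrix the projector onto an eigenspace is the orthogonal one. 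With $\Pi_\sigma\pi_U = \pi_U\Pi_\sigma$ in hand, \eqref{eq:subst_T} closes immediately:
$$T_\sigma\Phi = \Phi\Pi_\sigma\Phi^\dagger\Phi = \Phi\Pi_\sigma\pi_U = \Phi\pi_U\Pi_\sigma = \Phi\Phi^\dagger\Phi\,\Pi_\sigma = \Phi\Pi_\sigma,$$
using $\Phi\Phi^\dagger = \Id_d$ in the last step. This is \eqref{eq:equivariant}, equivalently $T_\sigma v_i = v_{\sigma(i)}$ for all $i\in V$.

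It remains to check that $T_\sigma\in\Aut_{\GL}(P)$ and that $\sigma\mapsto T_\sigma$ genuinely inverts $\phi^\cc$. Since $P$ is full-dimensional the $v_i$ span $\RR^d$, and $T_\sigma$ sends this spanning set to $\{v_{\sigma(1)},\dots,v_{\sigma(n)}\}$, the same set reordered, hence again spanning; so $T_\sigma$ is surjective and thus invertible, i.e.\ $T_\sigma\in\GL(\RR^d)$. Moreover $T_\sigma P = \conv\{T_\sigma v_1,\dots,T_\sigma v_n\} = \conv\{v_1,\dots,v_n\} = P$, so $T_\sigma\in\Aut_{\GL}(P)$. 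Finally $\phi^\cc(T_\sigma) = \sigma_{T_\sigma}$ is by definition the unique permutation with $T_\sigma v_i = v_{\sigma_{T_\sigma}(i)}$, so $\sigma_{T_\sigma} = \sigma$; hence $\phi^\cc$ is onto, therefore a bijective group homomorphism, therefore an isomorphism, and $\cc$ captures linear symmetries. I expect the one genuinely load-bearing point to be the middle paragraph, and within it the fact that the hypothesis must supply a \emph{full} eigenspace — a merely $A^\cc$-invariant subspace would not force $\Pi_\sigma$-invariance — while the rest is bookkeeping with $\Pi_\sigma A^\cc = A^\cc\Pi_\sigma$ and $\Phi\Phi^\dagger = \Id_d$.
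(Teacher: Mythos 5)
Your proposal is correct and follows essentially the same route as the paper: the ansatz $T_\sigma=\Phi\Pi_\sigma\Phi^\dagger$, the commutation $\Pi_\sigma A^\cc=A^\cc\Pi_\sigma$, the deduction that $\Pi_\sigma$ commutes with $\pi_U$, and the same closing computation. The only differences are cosmetic: you justify $\Pi_\sigma\pi_U=\pi_U\Pi_\sigma$ via spectral projectors as polynomials in the symmetric matrix $A^\cc$ (the paper argues via invariance of eigenspaces), and you spell out the verification that $T_\sigma\in\GL(\RR^d)$ and $T_\sigma P=P$, which the paper leaves implicit.
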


It might not be immediately obvious how \cref{res:criterion} is a helpful reformulation of the problem.
To apply it we need to construct a matrix $A^\cc$ with \mbox{two very~special} properties:
first, $A^\cc$ must be a (colored) adjacency matrix of the edge-graph $G_P$,\nlspace that is, it must have non-zero entries only where $G_P$ has edges.
Second, we need to ensure that $A^\cc$ has $U$ as an eigenspace.
It is not even clear that these two conditions are compatible.

\begin{remark}
\label{ex:easy_wrong_example}
Consider
%
%
the \enquote{obvious} matrix $A^\cc$ with eigenspace $U:=\Span\Phi^\dagger$:
%
$$A^\cc := \Phi^\dagger\Phi.$$
Of course, this matrix has most likely no zero-entries and is therefore not a colored adjacency matrix of $G_P$ (except if $G_P$ is the complete graphs).
However, it~is~exactly the colored adjacency matrix of the complete metric coloring as discussed in \cref{res:complete_coloring_linear} \itm2.

As it turns out, the proof of \cref{res:criterion} makes no use of the fact that the~coloring $\cc$ is defined on the edge-graph.
In fact, we can apply it to the complete~graph $K_n^\cc$ with colored adjacency matrix $A^\cc$.
In this way, the \enquote{linear algebra criterion}~pro\-vides an alternative proof of  \cref{res:complete_coloring_linear} \itm2.

\end{remark}

\section{The Izmestiev coloring}
\label{sec:izemestiev}

In this section we introduce a coloring of $G_P$ which satisfies the \enquote{linear algebra condition} \cref{res:criterion}.
This coloring is based on a construction by Ivan Izmestiev \cite{izmestiev2010colin} and we shall call it the \emph{Izmestiev coloring}.

The coloring is built in a quite unintuitive way.
First, we need to recall that for a polytope $P$ with $0\in\Int(P)$ the \emph{polar dual} $P^\circ$ is defined as
$$P^\circ:=\{x\in\RR^d\mid \<x,v_i\>\le 1 \text{ for all $i\in V$}\}.$$
%
%
We generalize this notion: for a vector $c=(c_1,...,c_n)\in\RR^n$ let
\begin{equation}
\label{eq:generalized_dual}
P^\circ(c) := \{x\in\RR^d\mid \<x,v_i\>\le c_i\text{ for all $i\in V$}\}.
\end{equation}
Then $P^\circ(1,...,1)=P^\circ$ and $P^\circ(c)$ is obtained from $P^\circ$ by shifting facets along their normal vectors (see \cref{fig:cube_dual}).

\begin{figure}[h!]
    \centering
    \includegraphics[width=0.65\textwidth]{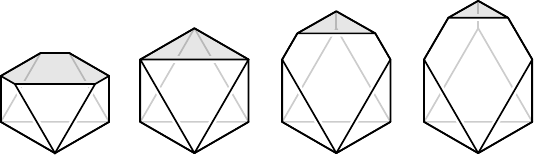}
    \caption{Several instances of the generalized dual $P^\circ(c)$ of the cube (the usual polar dual of the cube is the regular octahedron; the second from the left). The polytopes differ by a single facet-defining plane being shifted along its normal vector.}
    \label{fig:cube_dual}
\end{figure}

In the following, $\vol(C)$ denotes the \emph{relative volume} (relative to the affine hull~of $C$) of a compact convex set $C\subset\RR^d$.

\begin{theorem}[Izmestiev \cite{izmestiev2010colin}, Theorem 2.4]
\label{res:izmestiev}
For a polytope $P\subset\RR^d$ with $0\in\Int(P)$ consider the matrix $M\in$ $\RR^{n\times n}$ (which we shall call the \ul{Izmestiev matrix} of $P$) with components
$$M_{ij}:=\frac{\partial^2\vol(P^\circ(c))}{\partial c_i\partial c_j}\Big|_{\,c=(1,...,1)}.$$
(in particular, $\vol(P^\circ(c))$ is two times continuously differentiable in $c$).
$M$ then~has the following properties:
\begin{myenumerate}
	\item $M_{ij} < 0$ whenever $ij\in E$.
	\item $M_{ij} = 0$ whenever $ij\not\in E$ and $i\not=j$.
	\item $M$ has a unique negative eigenvalue of multiplicity one. 
	\item $M\Phi\T=0$, where $\Phi=(v_1,...,v_n)\in\RR^{d\x n}$ is the matrix introduced in \eqref{eq:equivariant}.
	\item $\dim \ker M=d$.
\end{myenumerate}
\end{theorem}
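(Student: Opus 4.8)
The plan is to understand $\vol(P^\circ(c))$ as a function of $c\in\RR^n$ near $c=(1,\dots,1)$ and to compute its Hessian explicitly. First I would record the basic geometry: for $c$ in a neighbourhood of $(1,\dots,1)$ the polytope $P^\circ(c)$ is combinatorially the same as $P^\circ$, each inequality $\langle x,v_i\rangle\le c_i$ defines a facet $F_i(c)$ whose supporting hyperplane is at signed distance $c_i/\|v_i\|$ from the origin, and as $c_i$ varies this facet translates along the fixed normal direction $v_i/\|v_i\|$. A standard first-variation formula for volume under facet translation gives $\partial\vol(P^\circ(c))/\partial c_i = \vol_{d-1}(F_i(c))/\|v_i\|$; differentiating once more, $M_{ij}=\partial^2\vol/\partial c_i\partial c_j$ measures how the $(d-1)$-volume of facet $F_i$ changes when facet $F_j$ is pushed out. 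I would want to check the $C^2$ regularity claim along the way — this is where one uses that the combinatorics is locally constant, so that $\vol_{d-1}(F_i(c))$ is a piecewise-polynomial-looking but in fact smooth function of $c$ on the relevant chamber.

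The signs in \itm1 and \itm2 then come out of this geometric reading. If $ij\notin E$ and $i\ne j$, then $F_i$ and $F_j$ are disjoint facets (no common edge), so pushing $F_j$ out does not change $F_i$ at all for small perturbations; hence $M_{ij}=0$, giving \itm2. If $ij\in E$, then $F_i$ and $F_j$ of $P^\circ$ share a ridge (a $(d-2)$-face), and pushing $F_j$ outward along $v_j/\|v_j\|$ shrinks the facet $F_i$ — intuitively the cutting hyperplane recedes, removing a sliver from $F_i$ — so $\vol_{d-1}(F_i)$ decreases and $M_{ij}<0$, giving \itm1. I would make the "shrinks" claim precise by writing $\vol_{d-1}(F_i(c))$ as an integral over $F_i$ and differentiating the boundary contribution coming from the ridge $F_i\cap F_j$, getting a strictly positive ridge-volume factor times a geometric cotangent-type factor; the key point is that this factor is strictly positive precisely because the two facets are genuinely adjacent and $P$ is full-dimensional.

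For \itm4 I would use the scaling/translation behaviour of $P^\circ(c)$. Replacing $c$ by $c + \Phi\T y$ for $y\in\RR^d$ — i.e. $c_i \mapsto c_i + \langle v_i,y\rangle$ — changes each constraint to $\langle x,v_i\rangle\le c_i+\langle v_i,y\rangle$, i.e. $\langle x-y,v_i\rangle\le c_i$, which is just the translate $P^\circ(c)+y$. Hence $\vol(P^\circ(c+\Phi\T y))=\vol(P^\circ(c))$ is independent of $y$; differentiating twice in $y$ at $y=0$ gives $\Phi M \Phi\T = 0$ as a quadratic form, and since this holds for all directions one upgrades it to $M\Phi\T=0$ (this needs that $M$ is symmetric, which it is as a Hessian, so $\Phi\T y\in\ker M$ for every $y$). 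Because $P$ is full-dimensional, $\Phi$ has rank $d$, so $\operatorname{rank}\Phi\T = d$ and $\ker M$ contains a $d$-dimensional subspace. The remaining content of \itm5 and \itm3 is that $\ker M$ is \emph{exactly} $d$-dimensional and that the rest of the spectrum splits as one negative eigenvalue plus $n-d-1$ positive ones. The hard part will be this spectral statement: I expect it to follow from the fact that $c\mapsto \vol(P^\circ(c))^{1/d}$ (or some power making it concave, via an Alexandrov–Fenchel / Brunn–Minkowski type inequality for the generalized duals) is concave with a nondegenerate "radial" Hessian, so that $M$ restricted to a complement of $\ker M$ has signature $(1,n-d-1)$ — one direction of strict concavity along dilation $c\mapsto \lambda c$ forcing the single negative eigenvalue. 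I would treat \itm3 and the sharpness in \itm5 as the genuine obstacle and lean on Izmestiev's volume-concavity argument rather than re-deriving it; the items \itm1, \itm2, \itm4 are the ones amenable to the direct first/second variation computation sketched above.
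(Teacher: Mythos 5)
First, a remark on the comparison you were asked for: the paper contains no proof of this statement at all --- it is imported verbatim from Izmestiev \cite{izmestiev2010colin}, and only properties \itm2, \itm4, \itm5 are used downstream --- so your sketch has to be judged on its own terms. Several of your ingredients are the right ones: the first-variation formula $\partial\vol(P^\circ(c))/\partial c_i=\vol_{d-1}(F_i(c))/\|v_i\|$, the locality argument for \itm2, and the translation identity $P^\circ(c+\Phi\T y)=P^\circ(c)+y$ for \itm4. For the latter, though, your upgrade from \enquote{$\Phi M\Phi\T=0$ as a quadratic form} to $M\Phi\T=0$ does not follow from symmetry alone (take $M=\diag(1,-1)$ and the direction $(1,1)\T$: the quadratic form vanishes there but the vector is not in $\ker M$); the correct fix is to differentiate the invariance $\vol(P^\circ(c+\Phi\T y))=\vol(P^\circ(c))$ once in $y$ and once in $c$ (equivalently, note that the gradients at $c$ and $c+\Phi\T y$ coincide), which gives $M\Phi\T=0$ directly.

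The genuine gap is a sign error running through your treatment of \itm1 and \itm3. Increasing $c_j$ \emph{relaxes} the constraint $\<x,v_j\>\le c_j$, so the facet $F_i(c)$, which is cut out inside its fixed hyperplane by the remaining constraints, can only \emph{grow}: the receding hyperplane adds a sliver to $F_i$ rather than removing one. Carrying out your own second-variation computation therefore yields $\partial^2\vol/\partial c_i\partial c_j=+\vol_{d-2}(F_i\cap F_j)/\bigl(\|v_i\|\|v_j\|\sin\angle(v_i,v_j)\bigr)>0$ for $ij\in E$, and along the dilation ray Euler's relation gives $c\T (\mathrm{Hess}\vol)\, c=d(d-1)\vol(P^\circ(c))>0$, i.e.\ the volume is strictly \emph{convex} there, not concave; Brunn--Minkowski concavity of $\vol(P^\circ(c))^{1/d}$ then controls the directions transverse to the ray. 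So the plain Hessian has one \emph{positive} eigenvalue and positive entries on edges, and the matrix with the stated properties \itm1, \itm3 is its \emph{negative} --- consistent with the minus sign in the paper's own formula \eqref{eq:izmestiev_geometic_interpretation}, and indicating a sign slip in the displayed definition of $M$. Your argument manufactures the stated negative sign from a geometric mechanism that is false, and the concavity-along-dilation reasoning for \itm3 is backwards in the same way, so neither would survive being carried out carefully. Two smaller points: the $C^2$ claim is not justified by \enquote{locally constant combinatorics}, since $P^\circ$ need not be simple and the combinatorial type of $P^\circ(c)$ can change arbitrarily close to $c=(1,\dots,1)$ (continuity of the ridge-volume expression is what one actually needs); and, as you acknowledge, the exactness $\dim\ker M=d$ together with the one-negative-eigenvalue statement is the real content, which you defer to Izmestiev --- acceptable here, since the paper does the same.
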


\begin{remark}

In the words of \cite{izmestiev2010colin}, the matrix $M$ constructed in \cref{res:izmestiev} is~a \emph{Colin~de~Ver\-dière matrix} of the edge-graph, that is, a matrix satisfying a certain list of properties, including \itm1, \itm2 and \itm3 and the so-called \emph{strong Arnold property} (for details, see \eg\ \cite{van1999colin}).

Among the Colin de Verdière matrices, one usually cares about the ones with~the largest possible kernel.
The dimension of this largest kernel is known as the \emph{Colin de Verdière graph invariant} $\mu(G_P)$ \cite{van1999colin}, and \cref{res:izmestiev} \itm5 then shows that $\mu(G_P)\ge d$.
This is not too surprising and was known before.
However, the result of Izmestiev is remarkable for a different reason: it shows that there is a Colin de Verdière matrix whose kernel has dimension \emph{exactly} $d$ (property \itm5) and that is compatible with the geometry of $P$ (property \itm4).

\end{remark}

\begin{remark}

Izmestiev also shows that the matrix $M$ can be expressed in terms of simple geometric properties of the polytope:
for $ij\in E$ let $f_{ij}\in\F_{d-2}(P^\circ)$ be the \emph{dual face} to the edge $\conv\{v_i,v_j\}\in\F_1(P)$. Then
\begin{equation}
\label{eq:izmestiev_geometic_interpretation}
M_{ij}=-\frac{\vol(f_{ij})}{\|v_i\| \|v_j\| \sin\angle(v_i,v_j)}.
\end{equation}
%



\end{remark}

\begin{definition}
The \emph{Izmestiev coloring} $\mathfrak I\:V\cupdot E\to\RR$ of $G_P$ is defined by
\begin{align*}
    \mathfrak I(i) &:= M_{ii},\quad\text{for all $i\in V$},\\
    \mathfrak I(ij) &:= M_{ij},\quad\text{for all $ij\in E$},
\end{align*}
where $M\in\RR^{n\x n}$ is the Izmestiev matrix of $P$.
\end{definition}

\begin{observation}
\label{res:Izmestiev_adjacency}
Since $M_{ij}=0$ whenever $ij\notin E$ and $i\not=j$ (by \cref{res:izmestiev}~\itm2), the colored adjacency matrix $A^\II$ of $G_P^{\mathfrak I}$ is exactly the Izmestiev matrix $M$.
\end{observation}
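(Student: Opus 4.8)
The statement in question, \cref{res:Izmestiev_adjacency}, is a one-line observation: since the off-diagonal entries of the Izmestiev matrix $M$ vanish exactly on non-edges (by \cref{res:izmestiev}~\itm2), the colored adjacency matrix $A^{\mathfrak I}$ of $G_P^{\mathfrak I}$ coincides with $M$.

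\bigskip

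The plan is simply to compare the two matrices entry by entry. Recall the definition of the colored adjacency matrix: $A^{\mathfrak I}_{ij}$ equals $\mathfrak I(i)$ if $i=j$, equals $\mathfrak I(ij)$ if $ij\in E$, and equals $0$ otherwise. By the definition of the Izmestiev coloring, $\mathfrak I(i)=M_{ii}$ and $\mathfrak I(ij)=M_{ij}$, so the first two cases give $A^{\mathfrak I}_{ij}=M_{ij}$ immediately. It remains to check the third case: when $i\ne j$ and $ij\notin E$, we have $A^{\mathfrak I}_{ij}=0$ by definition, while \cref{res:izmestiev}~\itm2 tells us $M_{ij}=0$ as well. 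Hence $A^{\mathfrak I}_{ij}=M_{ij}$ in all cases, so $A^{\mathfrak I}=M$.

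\bigskip

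There is essentially no obstacle here: the observation is a bookkeeping check that the zero pattern of $M$ matches the zero pattern forced on any colored adjacency matrix of $G_P$, which is precisely what property \itm2 of \cref{res:izmestiev} guarantees. (One might also remark that property \itm1 is not needed for the identification — it only tells us the edge-colors are nonzero, which is consistent but not required — and that the diagonal entries $M_{ii}$ are unconstrained by \cref{res:izmestiev}, matching the fact that vertex colors in a colored adjacency matrix are likewise unconstrained.) The point of recording this observation is that it licenses us, in the next step, to invoke \cref{res:criterion} with $A^{\mathfrak c}=M$: once we know $\ker M = \Span\Phi^\dagger$ (which will follow from \cref{res:izmestiev}~\itm4 and~\itm5 together with full-dimensionality of $P$), the hypothesis of \cref{res:criterion} — that $U=\Span\Phi^\dagger$ is an eigenspace of the colored adjacency matrix — is exactly the statement that $U$ is the $0$-eigenspace of $M$, and \cref{res:Izmestiev_adjacency} is what makes "colored adjacency matrix of $G_P^{\mathfrak I}$" and "$M$" interchangeable in that sentence.
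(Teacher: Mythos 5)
Your proposal is correct and matches the paper's reasoning exactly: the observation is proved by the same entry-by-entry comparison, with \cref{res:izmestiev}~\itm2 handling the non-edge, off-diagonal case and the definition of $\II$ handling the rest. The paper simply states this in one line without a separate proof, so there is nothing further to add.
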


In order to apply the \enquote{linear algebra criterion} from  \cref{sec:linear_algebra_condition}, showing that~$\phi^{\mathfrak I}$~is an isomorphism, we first need to show that $\phi^{\mathfrak I}$ is well-defined, that is, that $G_P^{\mathfrak I}$ is at least as symmetric as $P$.
This part is relatively straightforward if we use that the Izmestiev matrix is a linear invariant of $P$. We include a proof for completeness:







\iftrue

\begin{proposition}
\label{res:Izmestiev_coarsens_orbit}
$G_P^{\mathfrak I}$ is at least as symmetric as $P$, that is, $\phi^{\mathfrak I}$ is well-defined.
\end{proposition}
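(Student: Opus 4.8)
The plan is to show that every linear symmetry $T\in\Aut_{\GL}(P)$ induces a combinatorial symmetry $\sigma_T$ that preserves the Izmestiev coloring, i.e.\ $\sigma_T\in\Aut(G_P^{\mathfrak I})$. Since we already know $\sigma_T\in\Aut(G_P)$ from the natural homomorphism, what remains is purely to verify the color conditions: $\mathfrak I(i)=\mathfrak I(\sigma_T(i))$ for all vertices and $\mathfrak I(ij)=\mathfrak I(\sigma_T(i)\sigma_T(j))$ for all edges. By \cref{res:Izmestiev_adjacency} this is equivalent to the single matrix identity $\Pi_{\sigma_T} M = M\Pi_{\sigma_T}$, where $M=A^{\mathfrak I}$ is the Izmestiev matrix. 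So the goal reduces to: \emph{the Izmestiev matrix is equivariant under linear symmetries of $P$.}

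The key step is to establish that $M$ transforms naturally under any $T\in\GL(\RR^d)$ applied to $P$, and in particular is invariant (up to the vertex relabeling) under linear symmetries. First I would record how the generalized dual behaves under $T$: from \eqref{eq:generalized_dual}, since $\<x,Tv_i\>=\<T\T x, v_i\>$, one gets $(TP)^\circ(c) = T\mT\, P^\circ(c)$ with the \emph{same} parameter vector $c$ (the defining inequalities for $TP$ indexed by the vertices $Tv_i$ are exactly the inequalities for $P$ indexed by $v_i$, pulled back by $T\mT$). Applying a linear change of variables $x\mapsto T\T x$ rescales relative volume by the constant factor $|\det \text{(something)}|$ depending only on $T$ and the dimension — crucially \emph{independent of $c$}. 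Hence $\vol((TP)^\circ(c)) = \lambda_T\cdot\vol(P^\circ(c))$ for a constant $\lambda_T>0$, and differentiating twice in $c$ at $c=(1,\dots,1)$ gives $M_{TP} = \lambda_T\, M_P$, where $M_{TP}$ is the Izmestiev matrix of $TP$ with vertices enumerated as $Tv_1,\dots,Tv_n$.

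Now specialize to $T\in\Aut_{\GL}(P)$, so $TP=P$. Then the vertex list $Tv_1,\dots,Tv_n$ is the list $v_{\sigma_T(1)},\dots,v_{\sigma_T(n)}$, i.e.\ it is the original vertex list permuted by $\sigma_T$. On one hand, $M_{TP}$ computed with this permuted enumeration is, by the definition of the Izmestiev matrix and the chain rule for the reindexing $c_i\mapsto c_{\sigma_T^{-1}(i)}$, exactly $\Pi_{\sigma_T} M_P \Pi_{\sigma_T}^{-1} = \Pi_{\sigma_T} M_P \Pi_{\sigma_T}\T$. On the other hand, $TP=P$ forces $\lambda_T=1$ (take traces, or simply note $M_{TP}=M_P$ as matrices attached to the \emph{same} polytope once we agree on an enumeration — the cleanest phrasing is that $\vol(P^\circ(c))$ is literally the same function of $c$, so its Hessian is literally $M_P$). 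Combining, $\Pi_{\sigma_T} M_P \Pi_{\sigma_T}\T = M_P$, i.e.\ $\Pi_{\sigma_T} M_P = M_P \Pi_{\sigma_T}$ (using that $\Pi_{\sigma_T}$ is orthogonal), which is precisely the condition $\sigma_T\in\Aut(G_P^{\mathfrak I})$. Thus $\phi^{\mathfrak I}$ is well-defined.

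The main obstacle — really the only subtle point — is bookkeeping the interplay between the linear change of variables in $\RR^d$ (which rescales the dual volume) and the relabeling of the parameters $c_i$ in $\RR^n$ (which conjugates the Hessian by a permutation matrix), and making sure the two are cleanly separated so that the volume rescaling constant drops out when $TP=P$. Once the identity $\vol((TP)^\circ(c))=\lambda_T\vol(P^\circ(c))$ with $c$-independent $\lambda_T$ is in hand, the rest is a routine second-derivative computation. An alternative, essentially equivalent route that avoids tracking $\lambda_T$ altogether is to use the geometric formula \eqref{eq:izmestiev_geometic_interpretation}: a linear symmetry $T$ maps the edge $\conv\{v_i,v_j\}$ to $\conv\{v_{\sigma_T(i)},v_{\sigma_T(j)}\}$ and the dual face $f_{ij}$ to $f_{\sigma_T(i)\sigma_T(j)}$, and one checks that the quantity $\vol(f_{ij})/(\|v_i\|\|v_j\|\sin\angle(v_i,v_j))$ is a linear invariant — but this requires knowing that the individual factors recombine correctly under a non-orthogonal $T$, so the volume-rescaling argument above is the more robust choice and is the one I would write up.
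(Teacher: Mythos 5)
Your proof is correct and follows essentially the same route as the paper: both rest on $(TP)^\circ(c)=T\mT P^\circ(c)$ (so the dual volume rescales by a $c$-independent factor, which is $1$ for a symmetry) together with the reindexing $(TP)^\circ(c)=P^\circ(\Pi_{\sigma_T}c)$, followed by differentiating twice at $c=(1,\dots,1)$ to get $M_{ij}=M_{\sigma_T(i)\sigma_T(j)}$, i.e.\ $\Pi_{\sigma_T}M=M\Pi_{\sigma_T}$. The only soft spot is your parenthetical justification of $\lambda_T=1$ (the trace argument would need $\tr M\neq 0$, and ``literally the same function of $c$'' ignores the re-enumeration); the clean reason, which the paper uses, is simply that $|\det T|=1$ for any element of the finite group $\Aut_{\GL}(P)$.
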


\begin{proof}
Fix a linear symmetry $T\in\Aut_{\GL}(P)$ and let $\sigma_T\in\Aut(G_P)$ be the induced combinatorial symmetry of the edge-graph.
We need to show that $\sigma_T$ preserves the Izmestiev coloring, that is, $\sigma_T\in \Aut(G_P^\II)$.

This requires two ingredients. 
For the first, one checks that the generalized polar dual $P^\circ(c)$ (like the usual polar dual) satisfies
$$(TP)^\circ(c)=T\mT\! P^\circ(c),$$
which then gives us
\begin{equation}
\label{eq:1345}
\vol\big((TP)^\circ(c)\big)=\det(T\mT)\vol(P^\circ(c))=\vol(P^\circ(c)),
\end{equation}
where we used that $\det(T\mT)=\det(T)=1$ holds for all linear transformations in~a finite matrix group such as $\Aut_{\GL}(P)$.

The second ingredient is the following: 
\begin{align}
(TP)^\circ(c) \label{eq:123}
    &= \{ x\in\RR^d \mid \<x,Tv_i\>\le c_i\text{ for all $i\in V$}\}
  \\&= \{ x\in\RR^d \mid \<x,v_{\sigma_T(i)}\>\le c_i\text{ for all $i\in V$}\}  \nonumber
  \\&= \{ x\in\RR^d \mid \<x,v_i\>\le c_{\sigma_T^{-1}(i)}\text{ for all $i\in V$}\} \nonumber 
  \\&= P^\circ(\Pi_{\sigma_T} c).\footnotemark  \nonumber
\end{align}%
%

Putting everything together, we can show $\mathfrak I(i)=\mathfrak I(\sigma_T(i))$ for all $i\in V$, and~equivalently for edges.
We show both at the same time by proving $M_{ij}=M_{\sigma_T(i)\sigma_T(j)}$~for all $i,j\in\{1,...,n\}$:%
\footnotetext{Recall that $\Pi_\sigma$ was defined so that $(\Pi_\sigma v)_i = v_{\sigma^{-1}(i)}$ for a vector $v\in\RR^n$.}%
\begin{align*}
M_{ij}
   = \frac{\partial^2 \vol(P^\circ(c))}{\partial c_{i} \partial c_{j}} \Big|_{c=c_0}
 \!\!\!\!\!&= \frac{\partial^2 \vol(P^\circ(\Pi_\sigma c))}{\partial c_{\sigma_T(i)} \partial c_{\sigma_T(j)}} \Big|_{c=c_0}
 \!\!\!\!\!\!\!\overset{\eqref{eq:123}}= \frac{\partial^2 \vol((TP)^\circ(c))}{\partial c_{\sigma_T(i)} \partial c_{\sigma_T(j)}} \Big|_{c=c_0}
 \\[1ex]&\overset{\eqref{eq:1345}}= \frac{\partial^2 \vol(P^\circ(c))}{\partial c_{\sigma_T(i)} \partial c_{\sigma_T(j)}} \Big|_{c=c_0}
 \!\!\!\!\!
 = M_{\sigma_T(i)\sigma_T(j)},
\end{align*}
where we set $c_0 := (1,...,1)\in\RR^n$.
\end{proof}

\else

\begin{proposition}
\label{res:Izmestiev_coarsens_orbit}
The Izmestiev coloring coarsens the (linear) orbit coloring.
\end{proposition}

\begin{proof}
Fix $i,j\in V$ with the same color in the orbit coloring.
Then $v_i,v_j\in\F_0(P)$ are in the same $\Aut_{\GL}(P)$-orbit, \ie\ there is a $T\in\Aut_{\GL}(P)$ with $Tv_i=v_j$.
Let $\sigma_T\in$ $\Aut(G_P)$ be the induced symmetry of the edge-graph.

It is well known that $(TP)^\circ= T\mT\! P^\circ$, and the same holds for the generalized dual: $(TP)^\circ(c)=T\mT\! P^\circ(c)$.
In particular, $\vol((TP)^\circ(c))=\det(T\mT)\vol(P^\circ(c))=\vol(P^\circ(c))$, since $\det(T\mT)=\det(T)=1$ for all linear transformations that are contained in a finite matrix group.

Finally, one can confirm that $(TP)^\circ(c) = P^\circ(\Pi_{\sigma_T} c)$:
\begin{align*}
(TP)^\circ(c) 
    &= \{ x\in\RR^d \mid \<x,Tv_i\>\le c_i\text{ for all $i\in V$}\}
  \\&= \{ x\in\RR^d \mid \<x,v_{\sigma_T(i)}\>\le c_i\text{ for all $i\in V$}\}
  \\&= \{ x\in\RR^d \mid \<x,v_i\>\le c_{\sigma_T^{-1}(i)}\text{ for all $i\in V$}\}
  \\&= P^\circ(\Pi_{\sigma_T} c).\footnotemark
\end{align*}%
\footnotetext{Note that multiplication with a permutation matrix $\Pi_\sigma$ from the \emph{left} permutes the rows by the \emph{inverse} permutation $\sigma^{-1}$.}%
%

Now, putting everything together and abbreviating $c_0 := (1,...,1)\in\RR^n$, we find
\begin{align*}
M_{ij}(P)
   &= \frac{\partial^2 \vol(P^\circ(c))}{\partial c_{i} \partial c_{j}} \Big|_{c=c_0}
 \!\!\!\!\!= \frac{\partial^2 \vol(P^\circ(\Pi_\sigma c))}{\partial c_{\sigma_T(i)} \partial c_{\sigma_T(j)}} \Big|_{c=c_0}
 \\&= \frac{\partial^2 \vol((TP)^\circ(c))}{\partial c_{\sigma_T(i)} \partial c_{\sigma_T(j)}} \Big|_{c=c_0}
 \!\!\!\!\!= M_{\sigma_T(i)\sigma_T(j)}(TP) = M_{\sigma_T(i)\sigma_T(j)}(P).
\end{align*}
\end{proof}

\fi

\begin{theorem}
\label{res:Izmestiev_works}
The Izmestiev coloring captures the linear symmetries of $P$.
\begin{proof}
By \cref{res:Izmestiev_coarsens_orbit}, the Izmestiev coloring $\II$ is at least as symmetric as $P$, and so we can try to apply the \enquote{linear algebra criterion} (\cref{res:criterion}) to show that $\II$ captures linear symmetries.
That is, we need to show that $U:=\Span\Phi^\dagger$ is an eigenspace of the colored adjacency matrix $A^\II$ of $G_P^\II$.
Recall that $A^\II$ is exactly the Izmestiev matrix (\cref{res:Izmestiev_adjacency}), and so we can try to use the various properties of this matrix established in \cref{res:izmestiev}.


First, $U=\Span \Phi^\dagger=\Span\Phi\T$ (since the columns of $\Phi\T$ and $\Phi^\dagger$ are~dual~bases of $U$), and so \cref{res:izmestiev} \itm4 can be read as $U\subseteq\ker A^\II$.
Second, we have~both $\dim U=\rank\Phi =d$ (since $P$ is full-dimensional) and $\dim\ker A^\II=d$ (by \cref{res:izmestiev} \itm5).
Comparing dimensions, we thus have $U=\ker A^\II$.

We conclude that $U$ is an eigenspace of $A^\II$ (namely, the eigenspace to eigenvalue $0$).
The \enquote{linear algebra criterion} \cref{res:criterion} then asserts that $\II$ captures the~linear symmetries of~$P$.
\end{proof}
\end{theorem}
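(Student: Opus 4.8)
The plan is to apply the ``linear algebra criterion'' (\cref{res:criterion}) to the Izmestiev coloring $\II$, so the entire argument reduces to verifying its two hypotheses: (a) $G_P^\II$ is at least as symmetric as $P$, and (b) $U := \Span\Phi^\dagger$ is an eigenspace of the colored adjacency matrix $A^\II$. Hypothesis (a) is exactly \cref{res:Izmestiev_coarsens_orbit}, which we have already established, so the work is in (b).

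For hypothesis (b), the first step is to recall from \cref{res:Izmestiev_adjacency} that $A^\II$ equals the Izmestiev matrix $M$; this is what lets us bring the structural properties in \cref{res:izmestiev} to bear. Next I would identify the subspace $U$ correctly: although $U$ is defined as $\Span\Phi^\dagger$, the columns of $\Phi^\dagger$ and of $\Phi\T$ are both bases of the same $d$-dimensional space (the row space of $\Phi$, equivalently the column space of $\Phi\T$), so $U = \Span\Phi\T$. With this identification, property \itm4 of \cref{res:izmestiev}, namely $M\Phi\T = 0$, says precisely that every column of $\Phi\T$ lies in $\ker M = \ker A^\II$, i.e.\ $U \subseteq \ker A^\II$. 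Then I would compare dimensions: $\dim U = \rank \Phi = d$ because $P$ is full-dimensional, while $\dim\ker A^\II = d$ by property \itm5 of \cref{res:izmestiev}. A subspace contained in a space of the same finite dimension must equal it, so $U = \ker A^\II$.

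Finally, $\ker A^\II$ is the eigenspace of $A^\II$ for eigenvalue $0$, so $U$ is an eigenspace of $A^\II$. Both hypotheses of \cref{res:criterion} now hold, and the criterion yields that $\II$ captures the linear symmetries of $P$, completing the proof.

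I do not expect any genuine obstacle here: the proof is essentially an assembly step, and the only point requiring a moment's care is the identification $\Span\Phi^\dagger = \Span\Phi\T$, which follows from the fact that $\Phi^\dagger$ and $\Phi\T$ have the same column space (the pseudo-inverse does not change the row space of $\Phi$). All the substantive content — the existence of a Colin de Verdière type matrix supported on the edge-graph with kernel of dimension exactly $d$ and compatible with the vertex geometry — is imported wholesale from Izmestiev's theorem and the already-proven well-definedness of $\phi^\II$.
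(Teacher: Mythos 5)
Your proposal is correct and follows essentially the same route as the paper's own proof: invoke \cref{res:Izmestiev_coarsens_orbit} for well-definedness, identify $A^\II$ with the Izmestiev matrix via \cref{res:Izmestiev_adjacency}, use $\Span\Phi^\dagger=\Span\Phi\T$ together with properties \itm4 and \itm5 of \cref{res:izmestiev} and a dimension count to get $U=\ker A^\II$, and then apply \cref{res:criterion}. The one point you flag as needing care — that $\Phi^\dagger$ and $\Phi\T$ have the same column space — is exactly the point the paper also singles out, so nothing is missing.
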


By \cref{res:orbit_coloring_finest}, if there is any coloring that captures linear symmetries, then the orbit coloring does so as well:

\begin{corollary}
\label{res:orbit_coloring_works}
The orbit coloring captures the linear symmetries of $P$.
\end{corollary}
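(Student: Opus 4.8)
The plan is to derive \cref{res:orbit_coloring_works} as an immediate consequence of \cref{res:Izmestiev_works} together with \cref{res:orbit_coloring_finest}. The logical chain is short: \cref{res:Izmestiev_works} establishes that \emph{some} coloring of $G_P$ — namely the Izmestiev coloring $\II$ — captures the linear symmetries of $P$. But \cref{res:orbit_coloring_finest} says precisely that \emph{if any coloring captures linear symmetries, then so does the orbit coloring $\oo$}. Combining the two yields the claim with no further work.

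Concretely, I would first invoke \cref{res:Izmestiev_works} to produce a witness: there exists a coloring $\cc$ (take $\cc=\II$) with $\Aut(G_P^\cc)\cong\Aut_{\GL}(P)$ via the natural homomorphism $\phi^\cc$. Then I would feed this witness into \cref{res:orbit_coloring_finest}, whose hypothesis is exactly the existence of such a $\cc$; its conclusion is that $\oo$ captures the linear symmetries of $P$, i.e. $\Aut(G_P^\oo)\cong\Aut_{\GL}(P)$ via $\phi^\oo$. That is the entire argument.

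If one wanted to spell out \emph{why} \cref{res:orbit_coloring_finest} applies rather than just cite it, the substance is the cyclic chain of injective group homomorphisms between finite groups
$$
\Aut(G_P^{\II}) \xrightarrow{(\phi^{\II})^{-1}} \Aut_{\GL}(P) \xrightarrow{\phi^{\oo}} \Aut(G_P^{\oo}) \hookrightarrow \Aut(G_P^{\II}),
$$
where the first map is an isomorphism by \cref{res:Izmestiev_works}, the second is well-defined because $\phi^{\oo}$ is always well-defined (vertices in a common $\Aut_{\GL}(P)$-orbit get a common $\oo$-color), and the last inclusion holds because $\oo$ is finer than $\II$ (this finer-than relation follows since the introductory discussion of \cref{ssec:orbit_coloring} shows any symmetry-capturing coloring, in particular $\II$, must be constant on orbits, hence coarsens $\oo$). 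Since all three maps are injective and the composite is the identity on a finite group, each is a bijection; in particular $\phi^{\oo}$ is an isomorphism.

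There is essentially no obstacle here — the corollary is a formal consequence of results already in hand. The only thing to be careful about is that the isomorphism claimed in \cref{def:capturing_symmetries} must be realized by the natural homomorphism $\phi^{\oo}$ specifically, not just by some abstract isomorphism; but this is exactly what the cyclic-chain argument of \cref{res:orbit_coloring_finest} delivers, so no additional argument is needed.
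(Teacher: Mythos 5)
Your argument is correct and is exactly the paper's: the corollary is obtained by feeding the Izmestiev coloring from \cref{res:Izmestiev_works} as the witness into \cref{res:orbit_coloring_finest}. The extra detail you give about the cyclic chain of injections is just the paper's own proof of \cref{res:orbit_coloring_finest}, so nothing further is needed.
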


\begin{remark}

A coloring $\cc$ is said to be \emph{finer} than a coloring $\bar \cc$ if 
\begin{align*}
\qquad\qquad\qquad\qquad\cc(i)=\cc(\hati)\phantom{\hatj}\;\;&\implies\;\; \phantom j\bar\cc(i)=\bar\cc(\hati),\quad &&\text{for all $i,\hati\in V$},\qquad\qquad\qquad\qquad\\
\qquad\qquad\qquad\qquad\cc(ij)=\cc(\hati\hatj)\;\;&\implies\;\; \bar\cc(ij)=\bar\cc(\hati\hatj),\quad &&\text{for all $ij,\hati\hatj\in E$}.\qquad\qquad\qquad\qquad
\end{align*}
Conversely, $\bar \cc$ is said to be \emph{coarser} than $\cc$.

It is easy to see that the orbit coloring is the \emph{finest} coloring that captures linear symmetries, that is, it uses the most colors (consider the argument in the first paragraph of \cref{ssec:orbit_coloring}).
In contrast, the Izmestiev coloring is in general neither the finest nor the coarsest coloring with this property. 
Actually determining the coarsest such coloring (\ie\ using the fewest colors) seems like a challenging task.
\end{remark}

\section{Capturing orthogonal symmetries}
\label{sec:capturing_Euclidean_symmetries}

For this section we consider the orthogonal symmetry group $\Aut_{\OO}(P)$ and all notations without an explicit hint to the kind of symmetry (such as $\phi^\cc$ or $\oo$) implicitly refer to their orthogonal versions.

Recall the metric coloring $\mathfrak m\:V\cupdot E\to\RR$ (\cref{def:metric_coloring}) with
\begin{align*}
\qquad\qquad\qquad\mathfrak m(i)&=\|v_i\|^2,\quad &&\text{for all $i\in V$},\qquad\qquad\qquad\\
\qquad\qquad\qquad\mathfrak m(ij)&=\<v_i,v_j\>,\quad &&\text{for all $ij\in E$}.\qquad\qquad\qquad
\end{align*}
As previously mentioned, we consider $\mm$ a candidate for capturing orthogonal symmetries, but we are yet unable to prove this (see \cref{q:metric_coloring}).

Nevertheless, combining the metric coloring and the Izmestiev coloring allows us to construct a coloring for which we can actually prove this.

\begin{definition}
Given two colorings $\mathfrak c\:V\cupdot E\to\mathfrak C$ and $ \bar{\mathfrak c}\:V\cupdot E\to \bar{\mathfrak C}$, the \emph{product coloring} $\cc\times\bar\cc\:V\cupdot E\to\mathfrak C\times \bar{\mathfrak C}$ is defined by
\begin{align*}
\qquad\qquad\qquad\mathfrak (\cc\times\bar\cc)(i)\phantom j&:=(\cc(i),\phantom j\bar\cc(i)),\quad &&\text{for all $i\in V$},\qquad\qquad\qquad\\
\qquad\qquad\qquad\mathfrak (\cc\times\bar\cc)(ij)&:=(\cc(ij),\bar\cc(ij)),\quad &&\text{for all $ij\in E$}.\qquad\qquad\qquad
\end{align*}
\end{definition}

The relevant (and easy to verify) property of the product coloring is 
\begin{equation}
\label{eq:product_aut}
    \Aut(G_P^{\cc\times\bar \cc})=\Aut(G_P^\cc)\cap\Aut(G_P^{\bar\cc}).
\end{equation}
In particular, if both $\phi^\cc$ and $\phi^{\bar \cc}$ are well-defined, then so is $\phi^{\cc\times\bar \cc}$.

\begin{theorem}
\label{res:capturing_Euclidean_symmetries}
The coloring $\mathfrak I\times\mathfrak m$ captures the orthogonal symmetries of $P$.
\begin{proof}
The Izmestiev coloring $\II$ is at least as symmetric as $P$ (we know this for~linear symmetries by \cref{res:Izmestiev_coarsens_orbit}, which include the orthogonal symmetries as a special case). 
Like-wise, the metric coloring $\mm$ is at least as symmetric as $P$ (every orthogonal symmetry preserves norms and inner products, and therefore also the metric coloring).
So, since $\phi^\II$ and $\phi^\mm$ are well-defined, so is $\phi^{\II\times\mm}$.


It remains to show that $\phi^{\II\times\mm}$ has an inverse.
For that, fix a $\sigma\in\Aut(G_P^{\mathfrak I\x \mathfrak m})$.~%
By \eqref{eq:product_aut} we have $\sigma\in\Aut(G_P^\II)$.
By \cref{res:Izmestiev_works} there is a corresponding \mbox{$T_\sigma\in\Aut_{\GL}(P)$} with $T_\sigma v_i =$ $ v_{\sigma(i)}$ for all $i\in V$.
It remains to show that $T_\sigma\in\OO(\RR^d)$.

Since $P$ is full-dimensional, a set $S$ that contains any vertex $v_i$ together with its neighbors $\{v_j\mid ij\in E\}$ spans $\RR^d$, and so it suffices to verify
$\<T_\sigma v_k,T_\sigma v_\ell\> = \<v_k,v_\ell\>$
for every two $v_k,v_\ell\in S$ to prove the orthogonality of $T_\sigma$.

Also by \eqref{eq:product_aut}, $\sigma$ preserves the metric coloring $\mm$.
The claim then follows via
$$
\<v_k,v_\ell\> = \<v_{\sigma(k)},v_{\sigma(\ell)}\> = \<T_\sigma v_k,T_\sigma v_\ell\>,\qquad\text{for all $v_k,v_\ell\in S$},
$$
where we used that $v_k,v_\ell\in S$ implies $k=\ell$ or $k\ell\in E$.
%
%
%
\end{proof}
\end{theorem}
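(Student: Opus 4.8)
The plan is to verify the two conditions laid out in \cref{ssec:setting}: that the natural homomorphism $\phi^{\II\times\mm}\:\Aut_{\OO}(P)\to\Aut(G_P^{\II\times\mm})$ is well-defined, and that it admits an inverse; since $\phi^{\II\times\mm}$ is injective for free, this already yields the claimed isomorphism. For well-definedness I would argue componentwise: every orthogonal symmetry is in particular a linear symmetry, so $\phi^\II$ is well-defined by \cref{res:Izmestiev_coarsens_orbit}; and $\phi^\mm$ is well-defined because orthogonal transformations preserve norms and inner products, hence the quantities $\|v_i\|^2$ and $\<v_i,v_j\>$ defining $\mm$. By \eqref{eq:product_aut} the product coloring inherits this, so $\phi^{\II\times\mm}$ is well-defined as well.

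Next I would build the inverse. Fix $\sigma\in\Aut(G_P^{\II\times\mm})$. By \eqref{eq:product_aut} we have $\sigma\in\Aut(G_P^\II)$, and since the Izmestiev coloring captures \emph{linear} symmetries (\cref{res:Izmestiev_works}), there is a linear map $T_\sigma\in\Aut_{\GL}(P)$ with $T_\sigma v_i=v_{\sigma(i)}$ for all $i\in V$. The remaining---and only nontrivial---task is to promote $T_\sigma$ to an \emph{orthogonal} map; once that is done, $\sigma\mapsto T_\sigma$ is the desired inverse (uniqueness of $T_\sigma$ being automatic from full-dimensionality of $P$).

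To establish $T_\sigma\in\OO(\RR^d)$, I would invoke \eqref{eq:product_aut} once more to get that $\sigma$ preserves $\mm$; since $\sigma$ maps edges to edges, this yields $\<T_\sigma v_i,T_\sigma v_j\>=\<v_{\sigma(i)},v_{\sigma(j)}\>=\<v_i,v_j\>$ whenever $i=j$ or $ij\in E$. The idea is then to exploit these edge-local identities one vertex at a time: fix a vertex $v_i$ and set $S_i:=\{v_i\}\cup\{v_j\mid ij\in E\}$. By full-dimensionality the edge directions at $v_i$ span $\RR^d$, hence $S_i$ spans $\RR^d$; and every pair $(v_i,v)$ with $v\in S_i$ is either $(v_i,v_i)$ or an edge, so $\<T_\sigma v_i,T_\sigma v\>=\<v_i,v\>$ holds throughout $S_i$. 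Bilinearity together with $\Span S_i=\RR^d$ upgrades this to $\<T_\sigma v_i,T_\sigma y\>=\<v_i,y\>$ for all $y\in\RR^d$, \ie\ $T_\sigma\T T_\sigma v_i=v_i$. Running this over all vertices and using that the $v_i$ span $\RR^d$ gives $T_\sigma\T T_\sigma=\Id$, so $T_\sigma$ is orthogonal, and $\sigma\mapsto T_\sigma$ is the sought inverse of $\phi^{\II\times\mm}$.

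I expect the last step to be the crux. The metric coloring only records inner products along edges, so a priori one controls $T_\sigma$ only on pairs of adjacent vertices rather than on a full basis of $\RR^d$. What rescues the argument is that the star of a single vertex already spans $\RR^d$---a consequence of convexity/full-dimensionality---and that every pair inside such a star involves the central vertex and is therefore an edge. This is precisely where convexity enters, consistent with the paper's warning that the phenomenon fails for non-convex configurations.
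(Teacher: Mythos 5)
Your proposal is correct and follows essentially the same route as the paper: well-definedness checked componentwise, then \eqref{eq:product_aut} together with \cref{res:Izmestiev_works} to produce a linear $T_\sigma$, and finally the metric coloring on the star of a vertex to force $T_\sigma\in\OO(\RR^d)$. If anything, your last step is slightly more careful than the paper's, which asserts that any two vertices of the star $S$ are equal or adjacent (not literally true for two non-adjacent neighbors of $v_i$); you use only the pairs containing the central vertex and upgrade by bilinearity, one vertex at a time, which is exactly how that wording should be read.
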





By (the orthogonal version of) \cref{res:orbit_coloring_finest}, if there is any coloring that captures orthogonal symmetries, then so does the orthogonal orbit coloring:

\begin{corollary}
\label{res:Euclidean_orbit_coloring_works}
The orthogonal orbit coloring captures orthogonal symmetries.
\end{corollary}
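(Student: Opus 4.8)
The statement to prove is \cref{res:Euclidean_orbit_coloring_works}: the orthogonal orbit coloring captures orthogonal symmetries.

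\medskip

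The plan is to mirror exactly the argument used for the linear case, namely the proof of \cref{res:orbit_coloring_works}, which was obtained as a corollary of \cref{res:orbit_coloring_finest} together with the existence of \emph{some} coloring capturing linear symmetries. Here the analogous inputs are already in place: \cref{res:capturing_Euclidean_symmetries} provides a concrete coloring — the product coloring $\II\times\mm$ — that captures orthogonal symmetries, and (the orthogonal version of) \cref{res:orbit_coloring_finest} says that if \emph{any} coloring captures orthogonal symmetries then so does the orthogonal orbit coloring. So the proof is essentially a one-line invocation: apply the orthogonal analogue of \cref{res:orbit_coloring_finest} to the coloring $\II\times\mm$ furnished by \cref{res:capturing_Euclidean_symmetries}.

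\medskip

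To be a little more careful about what "the orthogonal version of \cref{res:orbit_coloring_finest}" means: one should note that the proof of \cref{res:orbit_coloring_finest} used only formal properties of the natural homomorphism $\phi^\cc$, the finiteness of the groups involved, the definition of the orbit coloring as grouping vertices/edges by their symmetry-orbits, and the basic fact that a coarser coloring has a larger combinatorial symmetry group (the inclusion $\Aut(G_P^\oo)\subseteq\Aut(G_P^\cc)$ when $\oo$ is finer than $\cc$). None of these ingredients is specific to $\GL$ versus $\OO$; one simply replaces $\Aut_{\GL}(P)$-orbits by $\Aut_{\OO}(P)$-orbits throughout. In particular the two sub-claims — (i) $\phi^\oo$ is well-defined for the orthogonal orbit coloring because every $T\in\Aut_{\OO}(P)$ sends each $v_i$ into the $\Aut_{\OO}(P)$-orbit of $v_i$, hence $\sigma_T$ preserves $\oo$; and (ii) $\Aut(G_P^\oo)\subseteq\Aut(G_P^\cc)$ for any coloring $\cc$ capturing orthogonal symmetries, since a permutation preserving orthogonal orbits maps $v_i$ and $v_{\sigma(i)}$ into the same $\Aut_{\OO}(P)$-orbit and the argument from the first paragraph of \cref{ssec:orbit_coloring} (adapted to $\OO$) forces $\cc(i)=\cc(\sigma(i))$ — go through verbatim. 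Then the cyclic chain
$$
\begin{tikzcd}
\mathrm{Aut}(G_P^{\mathfrak I\times\mathfrak m}) \arrow[r, "(\phi^{\II\times\mm})^{-1}"] & \mathrm{Aut}_{\mathrm{O}}(P) \arrow[r, "\phi^{\mathfrak o}"] & \mathrm{Aut}(G_P^{\mathfrak o}) \arrow[r, hook, "\text{(ii)}"] & \mathrm{Aut}(G_P^{\mathfrak I\times\mathfrak m})
\end{tikzcd}
$$
consists of injective maps between finite groups whose composite is the identity, so all of them, in particular $\phi^\oo$, are isomorphisms, and $\oo$ captures orthogonal symmetries.

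\medskip

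I do not expect any real obstacle here; the only thing to be mildly attentive to is bookkeeping — making sure that every appeal to "\cref{res:orbit_coloring_finest}" is understood in its orthogonal incarnation (the excerpt explicitly grants this: "this section only discusses the (linear) orbit coloring, but all statements carry over to the orthogonal version in the obvious way"), and that the coloring cited from \cref{res:capturing_Euclidean_symmetries} is indeed one to which that lemma applies, which it is. So the proof is simply: "By \cref{res:capturing_Euclidean_symmetries}, the coloring $\II\times\mm$ captures orthogonal symmetries. By the orthogonal version of \cref{res:orbit_coloring_finest}, the orthogonal orbit coloring then captures orthogonal symmetries as well." If a referee wanted more, one would spell out the two sub-claims (i) and (ii) above, but they are identical to those already proved in \cref{res:orbit_coloring_finest} with $\GL$ replaced by $\OO$.
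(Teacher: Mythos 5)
Your proposal is correct and matches the paper's own argument: the corollary is obtained exactly by applying the orthogonal version of \cref{res:orbit_coloring_finest} to the coloring $\II\times\mm$ furnished by \cref{res:capturing_Euclidean_symmetries}. Your extra verification that the lemma's proof carries over verbatim from $\GL$ to $\OO$ is fine but not needed beyond what the paper already asserts.
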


\section{Outlook, open questions and further notes}
\label{sec:outlook}

In this article we have shown that the edge-graph of a convex polytope, while~generally a very weak representative of the polytope's geometric nature, still has sufficient structure to let us encode two important types of geometric symmetries:\nlspace linear and orthogonal symmetries.
We achieved this by coloring the vertices and edges~of the edge-graph.

The first coloring for which we established that it \enquote{captures the polytope's linear symmetries} was the \emph{Izmestiev coloring} (\cref{res:Izmestiev_works}), based on an ingenious~construction by Ivan Izmestiev.
But we also found that the \emph{orbit coloring}, a conceptu\-ally very easy coloring, does the job as well (\cref{res:orbit_coloring_works}).
Analogous colorings~exist for the orthogonal symmetries as well (\cref{res:capturing_Euclidean_symmetries} and \cref{res:Euclidean_orbit_coloring_works}).

In the following we briefly discuss various potential generalizations and follow up questions concerning these results.
This further highlights the very special~structure of convex polytopes that went into our theorems, emphasizing again that these~results are non-trivial to achieve and to generalize.

We also want to mention the following neat consequence for \enquote{very symmetric} polytopes:


\begin{corollary}
If $P\subset\RR^d$ is vertex- and edge-transitive (\ie\ its linear \resp~ortho\-gonal symmetry group has a single orbit on vertices and edges), then $P$ is exactly~as symmetric as its edge-graph. 
\end{corollary}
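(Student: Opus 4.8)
The plan is to apply the orbit coloring results to a polytope that is vertex- and edge-transitive and observe that in this special case the orbit coloring becomes trivial (monochromatic). First, I would invoke \cref{res:orbit_coloring_works} (\resp\ \cref{res:Euclidean_orbit_coloring_works} for the orthogonal case): the linear (\resp\ orthogonal) orbit coloring $\oo$ captures the linear (\resp\ orthogonal) symmetries of $P$, meaning $\Aut(G_P^\oo)\cong\Aut_{\GL}(P)$ (\resp\ $\cong\Aut_{\OO}(P)$) via the natural homomorphism $\phi^\oo$.

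Next I would use the hypothesis. By definition, the orbit coloring assigns the same color to two vertices (\resp\ edges) of $G_P$ exactly when the corresponding vertices (\resp\ edges) of $P$ lie in the same $\Aut_{\GL}(P)$-orbit (\resp\ $\Aut_{\OO}(P)$-orbit). Vertex-transitivity says there is exactly one vertex orbit, so all vertices of $G_P$ get a single color; edge-transitivity says there is exactly one edge orbit, so all edges get a single color. Hence $\oo$ is a constant coloring on vertices and a constant coloring on edges, which imposes no nontrivial constraint on permutations beyond mapping edges to edges. Concretely, $\Aut(G_P^\oo)=\Aut(G_P)$, the uncolored combinatorial symmetry group of the edge-graph.

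Combining the two steps gives $\Aut_{\GL}(P)\cong\Aut(G_P^\oo)=\Aut(G_P)$ (and identically with $\Aut_{\OO}(P)$ in place of $\Aut_{\GL}(P)$), and since the isomorphism is realized by the natural homomorphism $\phi$, this is precisely the statement that $P$ is ``exactly as symmetric as its edge-graph.'' I should also note that $P$ being vertex- and edge-transitive forces $0\in\Int(P)$ (the centroid is fixed by all symmetries, and transitivity makes it an interior point), so the standing assumptions of the paper are met after a harmless translation, and translation affects neither $\Aut(G_P)$ nor the isomorphism type of the symmetry groups.

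There is essentially no obstacle here: the corollary is a direct specialization of the already-proven \cref{res:orbit_coloring_works} and \cref{res:Euclidean_orbit_coloring_works}. The only point requiring a sentence of care is the equality $\Aut(G_P^\oo)=\Aut(G_P)$, i.e.\ that a monochromatic coloring contributes no constraints, and the minor bookkeeping that the paper's blanket hypotheses ($P$ full-dimensional with $0$ in its interior) are compatible with the transitivity assumption. I would state the corollary's proof in two or three lines accordingly.
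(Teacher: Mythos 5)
Your argument is correct and is exactly the intended derivation: the paper states this corollary without proof as an immediate consequence of \cref{res:orbit_coloring_works} and \cref{res:Euclidean_orbit_coloring_works}, and your observation that transitivity makes the orbit coloring monochromatic, so that $\Aut(G_P^\oo)=\Aut(G_P)$, is precisely the missing step. The only slightly glossed point is the remark about translating $P$ so that $0\in\Int(P)$ (a translation can change $\Aut_{\GL}(P)$), but under the paper's standing hypotheses this issue does not arise.
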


This observation has previously been made in \cite[Theorem 5.2]{winter2020polytopes}.
No classification of simultaneously vertex- and edge-transitive polytopes is known so far, and so this fact might help in the study of this class.

\subsection{Capturing other types of symmetries}

Besides linear and orthogonal symmetries, there are at least two further common groups of symmetries associated with a polytope: the \emph{projective symmetries} and the \emph{combinatorial symmetries} (that is, the symmetries of the face lattice). 

We can ask whether those too can be captured by a colored edge-graph:

\begin{question}
Is there a coloring $\cc:V\cupdot E\to\mathfrak C$ that captures projective \resp\ combinatorial symmetries: 
$$\Aut(G_P^\cc)\cong\Aut_{\mathrm{PGL}}(P)\quad\text{\resp}\quad \Aut(G_P^\cc)\cong\Aut_{\mathrm{Comb}}(P)\;?$$
%
\end{question}

There might be a general strategy derived from the following (informal) inclusion chain of the symmetry groups:
$$
\Aut_{\OO}(P)
\;\subseteq\; \Aut_{\GL}(P)
\;\subseteq\;\Aut_{\mathrm{PGL}}(P)
\;\;\text{\enquote{$\subseteq$}}\;\Aut_{\mathrm{Comb}(P)}.
$$
As it turns out, having solved the coloring problem further to the left in the chain can help to solve the problem further to the right -- at least to some degree.

For example, note that every polytope $P$ can be linearly transformed via~a~trans\-formation $T\in\GL(\RR^d)$ so that $\Aut_{\GL}(P)=\Aut_{\OO}(TP)$.
That is, a coloring of $G_P$ that captures the orthogonal symmetries of $TP$ (which has the same edge-graph) also captures the linear symmetries of $P$.
In still other words, we solved the problem of capturing linear symmetries by making use of our ability to capture orthogonal symmetries.

In our approach, we have not made use of this because we needed to solved the linear case before the orthogonal one.
However, this can be of use for capturing projective symmetries.
More explicitly, the question is as follows:
for every polytope $P$, is there a projective transformation $T\in\PGL(\RR^d)$ so that $\Aut_{\PGL}(P)=\Aut_{\GL}(TP)$?

The same approach seems doomed for capturing combinatorial symmetries: there are polytopes with combinatorial symmetries that cannot be realized geometrically (\!\cite{bokowski1984combinatorial} discusses the case of a combinatorial symmetry that cannot be made linear; to our knowledge, realizing them as projective symmetries remains to be discussed).

\subsection{Edge-only coloring}

For capturing the symmetries of certain 2-dimensional polytopes it is necessary to color both vertices \emph{and} edges (\cf\ \cref{fig:hex_symmetry_2}).
But~it~is~unclear whether this is still necessary in higher dimensions.

\begin{question}
Is it sufficient to color \emph{only the edges} if $d\ge 3$? That is, is there an edge-only coloring $\cc\:E\to\mathfrak C$ that captures (for example) linear symmetries?
\end{question}

A vertex-only coloring is not always sufficient. For example, in even dimensions exist vertex-transitive neighborly polytopes other than the simplex: \eg\ for $n\ge 6$ we have the following cyclic 4-polytope with $n$ vertices that is not a simplex:
%
$$P:=\conv\left\{\begin{pmatrix}\cos(2\pi i/n)\\\sin(2\pi i/n)\\\cos(\phantom4\pi i/n)\\\sin(\phantom4\pi i/n)\end{pmatrix}\in\RR^4\;\Bigg\vert\; i\in\{1,...,n\}\right\}.$$
The edge-graph of $P$ is the complete graph $K_n$, and $P$ has a single orbit of vertices.
Thus, if $\cc\: V\to\mathfrak C$ is a vertex-only coloring that captures the symmetries of $P$,~then all vertices of $K_n$ must receive the same color.
But if the edges receive no color, then $\Aut(K_n^\cc)$ $=\Sym(V)$.
However, it is known that the linear symmetry group of the cyclic polytope $P$ other than a simplex is strictly smaller than $\Sym(V)$ \cite{KaibelAutomorphismGO}.

\subsection{Non-convex polytopes and general graph embeddings}

Our approach suggests no immediate generalization to non-convex polytopes or various forms of polytopal complexes.

\begin{question}
What is the most general geometric setting in which the symmetries can be \enquote{captured} by coloring the edge-graph? Does it work for non-convex and/or self-intersecting polytopes? What about more general polytopal complexes?
\end{question}

A vast generalization of polytope skeleta are \emph{graph embeddings}.
For a graph~$G=$ $(V,E)$, a graph embedding is simply a map $v\:V\to\RR^d$.
There are natural notions of symmetry for such embeddings, and so one might ask whether it is possible to \enquote{capture} them by coloring the graph.
The following example shows that this is not possible in general:

\begin{example}
\label{ex:K_4_4}
Consider the complete bipartite graph $K_{4,4}$ with vertex set $V_1\cupdot V_2=\{1,2,3,4\}\cupdot\{5,6,7,8\}$ and an embedding into $\RR^4$ defined as follows:
\begin{align*}
v_1 = (+1,0,0,0),\qquad v_5 = (0,0,+1,0), \\
v_2 = (0,+1,0,0),\qquad v_6 = (0,0,0,+1),\\
v_3 = (-1,0,0,0),\qquad v_7 = (0,0,-1,0),\\
v_4 = (0,-1,0,0),\qquad v_8 = (0,0,0,-1).
\end{align*}
One can check that the linear symmetry group of this embedding acts transitively on the vertices as well as the edges.
Thus, a coloring $\cc$ that is at least as symmetric as the graph embedding must assign the same color to all vertices, and like-wise, the same color to all edges.
That is, $\Aut(K_{4,4}^\cc)=\Aut(K_{4,4})$.

However, one can also see that the given embedding has a strictly smaller symmetry group than $\Aut(K_{4,4})$.
For example, $\sigma:=(12)\in\Aut(K_{4,4})$ cannot be realized as a geometric symmetry.
\end{example}

It might be interesting to determine conditions under which \enquote{capturing symmetries} is possible even in this very general case.









\subsection{The metric coloring}
\label{ssec:metric_coloring_open}

It is yet unknown whether the metric coloring alone can capture orthogonal symmetries (\cf\ \cref{ssec:metric_coloring} and \cref{sec:capturing_Euclidean_symmetries}).

\begin{question}
\label{q:metric_coloring}
Can the metric coloring $\mathfrak m$ capture orthogonal symmetries?
\end{question}

Any potential affirmative answer to \cref{q:metric_coloring} will need to make use of similar assumptions as the construction of the Izmestiev coloring, namely, convexity and $0\in\mathrm{int}(P)$, as there are known counterexamples for the other cases (see \cref{fig:metric_counterexample} and \cref{fig:metric_counterexample_2}).




\begin{figure}[h!]
    \centering
    \includegraphics[width=0.55\textwidth]{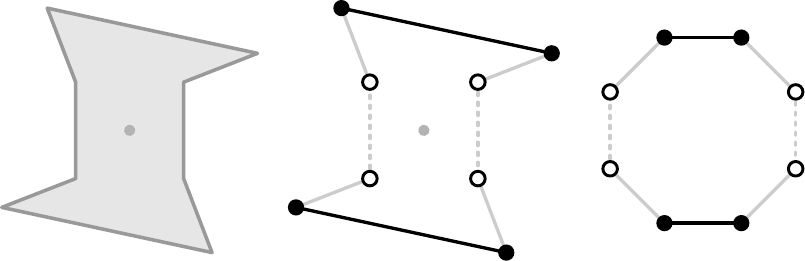}
    \caption{A non-convex shape and two drawings of its edge-graph with metric coloring. 
    The colored edge-graph has more symmetries than the polygon.}
    \label{fig:metric_counterexample}
\end{figure}

\begin{figure}[h!]
    \centering
    \includegraphics[width=0.55\textwidth]{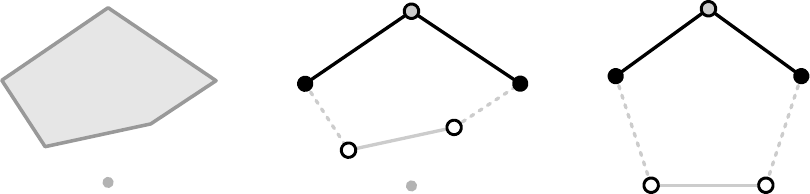}
    \caption{A convex polygon $P$ with $0\not\in\Int(P)$  (the gray dot indicates the origin) and two drawings of its edge-graph with metric coloring. 
    The colored edge-graph has more symmetries than the polygon.}
    \label{fig:metric_counterexample_2}
\end{figure}

An interesting special case is the following:

\begin{question}
If $P$ is inscribed (\ie\ it has all its vertices on a common sphere around the origin) and has all edges of the same length, then is it true that $P$ is as symmetric as its edge-graph, that is, $\Aut_{\OO}(P)\cong\Aut(G_P)$?
\end{question}

\par\bigskip
\parindent 0pt
\textbf{Acknowledgments.} The author thanks Frank Göring (TU Chemnitz) for insightful discussions.


\bibliographystyle{abbrv}
\bibliography{literature}

\end{document}